\begin{document}

%%%%%%%%%%%%%%%%%%%%%%%%%%%%%%%%%%%%%%%%%%%%%%%
%%%%%%%%%%%%%%%%%%%%%%%%%%%%%%%%%%%%%%%%%%%%%%%

\title*{Quasi-shuffle algebras in non-commutative stochastic calculus}
\titlerunning{Karandikar's axiomatisation of matrix stochastic calculus}

\author{Kurusch Ebrahimi-Fard, Fr\'ed\'eric Patras}
\authorrunning{K.~Ebrahimi-Fard, F.~Patras} 

\institute{Kurusch Ebrahimi-Fard \at Department of Mathematical Sciences, 
		Norwegian University of Science and Technology -- NTNU, 
		7491 Trondheim, Norway, \email{kurusch.ebrahimi-fard@ntnu.no, \url{https://folk.ntnu.no/kurusche/}}
\and Fr\'ed\'eric Patras \at Universit\'e C\^ote d'Azur,
		Labo.~J.-A.~Dieudonn\'e,
         		UMR 7351, CNRS, Parc Valrose,
         		06108 Nice Cedex 02, France, \email{patras@unice.fr}, \url{www-math.unice.fr/$\sim$patras}}

%%%%%%%%%%%%%%%%%%%%%%%%%%%%%%%%%%%%%%%%%%%%%%%
		
\maketitle

%%%%%%%%%%%%%%%%%%%%%%%%%%%%%%%%%%%%%%%%%%%%%%%

\abstract{This chapter is divided into two parts. The first is largely expository and builds on
Karandikar's axiomatisation of It\^o calculus for matrix-valued semimartingales. Its aim is to unfold in detail the algebraic structures implied for iterated It\^o and Stratonovich integrals. These constructions generalise the classical rules of Chen calculus for deterministic scalar-valued iterated integrals. The second part develops the stochastic analog of what is commonly called chronological calculus in control theory. We obtain in particular a pre-Lie Magnus formula for the logarithm of the It\^o stochastic exponential of matrix-valued semimartingales.}

\abstract*{This chapter is divided into two parts. The first is largely expository and builds on
Karandikar's axiomatisation of It\^o calculus for matrix-valued semimartingales. Its aim is to unfold in detail the algebraic structures implied for iterated It\^o and Stratonovich integrals. These constructions generalise the classical rules of Chen calculus for deterministic scalar-valued iterated integrals. The second part develops the stochastic analog of what is commonly called chronological calculus in control theory. We obtain in particular a pre-Lie Magnus formula for the logarithm of the It\^o stochastic exponential of matrix-valued semimartingales.}

%%%%%%%%%%%%%%%%%%%%%%%%%%%%%%%%%%%%%%%%%%%%%%%
%%%%%%%%%%%%%%%%%%%%%%%%%%%%%%%%%%%%%%%%%%%%%%%
%%%%%%%%%%%%%%%%%%%%%%%%%%%%%%%%%%%%%%%%%%%%%%%

\section{Introduction}
\label{sec:intro}

Algebra, renormalisation theory as well as numerical analysis are among a range of disparate fields that have seen a surge in interest for the study of various algebraic and combinatorial structures originating from the integration by parts formula, foundational to integral calculus. In particular, the theories of Rota--Baxter algebras, shuffle and quasi-shuffle products, pre- and post-Lie algebras as well as combinatorial bialgebras on rooted trees and words have undergone expansive phases in the last two decades. The following rather incomplete list of references provides some examples of these developments  \cite{Brouder2000,Cartier2011,Calaque2011,ChaPa2013,FPT2016,FauMen2017,LM2011,MP2018a,MP2018b,MKW2007,Murua2007,go2008}. In the particular context of stochastic integration, interest in these structures concentrated largely in Lyon's seminal theory of rough paths \cite{Lyons1998,Lyons2007}, which is based on Chen's iterated path integrals and shuffle algebra on words \cite{Chen57,Chen71,Ree1958}. Gubinelli expanded Lyon's theory by generalising it to a certain combinatorial Hopf algebra of rooted trees. The resulting notion of branched rough paths \cite{Gubinelli2010,Gubinelli2011,HK2015} draws inspiration from Butcher's  theory of B-series in numerical integration of differential equations \cite{HLW2002,McLach2017} as well as Connes and Kreimer's Hopf algebraic approach to renormalisation in perturbative quantum field theory \cite{CK1998}. The latter, moreover emphasised the pre-Lie algebraic perspective on rooted trees \cite{ChapLiv2001,Manchon2011}. Ideas from rough paths gave rise to various new developments, culminating in Hairer's celebrated theory of regularity structures \cite{H2013,H2014} and its algebraic renormalisation theory \cite{BHZ2019} used in the construction of solutions of very irregular S(P)DEs.

The relevance of such algebraic structures for classical -- non-commutative -- stochastic integration (in the sense of It\^o--Stratonovich) \cite{BaldPlat2013,Protter2005} has attracted less attention. Foundational papers in this field are Gaines's 1994 work on the algebra of iterated stochastic integrals \cite{Gaines1994}, introducing what is now called quasi-shuffle product, as well as the equivalent sticky shuffle product formula for iterated quantum It\^o integrals introduced in 1995 in the context of quantum stochastic calculus \cite{Beasley1995}. We refer to Hudson's review papers on Hopf-algebraic aspects of iterated stochastic integrals \cite{Hudson2009,Hudson2012}. The authors together with Charles Curry, Alexander Lundervold, Simon Malham, Hans Munthe-Kaas and Anke Wiese further developed the use of (quasi-)shuffle algebra in stochastic integration theory and numerical methods for SDEs in several joint works \cite{CEFMW2014,CKP2018,CEFMW2019,EFLMMW2012,EMPW2015a,EMPW2015b}. 

The present article is divided into two main parts. The first part starts by recalling Karandikar's axiomatisation of It\^o calculus for matrix-valued continuous semimartingales \cite{Karandikar1982a}. We discuss in detail the algebraic structures implied for iterated It\^o and Stratonovich integrals. To the best of our knowledge, such an account does not seem to exist in the literature. In fact, Karandikar's ideas do not seem to be widely known, as the algebraic notions and techniques involved are not of common use in non-commutative stochastic calculus. Our presentation is written with a view toward operationality and therefore, as far as possible, in the language of theoretical probability theory. In modern algebraic terminology, Karandikar's axioms define the notion of non-commutative quasi-shuffle algebra. We proceed by building consistently on this structure aiming at unraveling properties of integration techniques, ranging from general matrix-valued semimartingales to more specific situations (continuous paths). We remark that introducing the Stratonovich integral in full generality requires extra axioms corresponding to the splitting of the quadratic covariation bracket into a continuous and a jump part. We hope that these ideas might be useful also in other settings. 
 
The second part develops the stochastic analog of what is commonly called chronological calculus in control theory \cite{AG1978,AS2004,ABB2019}. We feature in particular Agrachev and Gamkrelidze's \cite{AG1981,AGS1989} study and systematical use of chronological algebra in control problems. In mathematics, chronological algebras are known as pre-Lie or Vinberg algebras \cite{Burde2006,Cartier2011}. This part of our work is a continuation of joint work with Charles Curry \cite{CKP2018}, where a pre-Lie Magnus formula for the logarithm of the Stratonovich stochastic exponential for continuous matrix-valued semimartingales was introduced.

\smallskip 

A more detailed outline of the paper follows. The first section is divided into three subsections. We begin by focusing on general It\^o calculus for matrix-valued semimartingales, introduce Karandikar's axioms, and point out connections with other theories, especially Rota--Baxter algebras. The second subsection introduces formally the splitting of the covariation bracket, leading to a tentative axiomatisation of Stratonovich calculus. The last subsection studies stochastic calculus for continuous semimartingales, a situation where the axioms simplify dramatically, allowing to replace quasi-shuffles by shuffles, that is, permitting the use the standard rules of calculus.

In the second section we study pre-Lie algebraic aspects in stochastic integral calculus. We consider the pre-Lie Magnus formula in the general context of enveloping algebras of pre-Lie algebras -- the corresponding section can be understood also as an introduction to pre-Lie structures since we survey some of their most relevant properties for general integral calculus, following the chapter \cite{Nacer2020} written in the context of classical integration. The second and last subsection of this section changes focus by developing instead a pre-Lie point of view on It\^o integral calculus, without restriction to continuous matrix-valued semimartingales. We obtain in particular a pre-Lie Magnus formula for the logarithm of the It\^o stochastic exponential of matrix-valued semimartingales.

\smallskip

{\bf Conventions} i) With the aim of simplifying the presentation we shall always assume that the value of semimartingales is zero at $t=0$. ii) All structures are defined over a ground field $k$ of characteristic zero. 

\begin{acknowledgement}
The second author would like to express his gratitude for the warm hospitality he experienced during the 2019 Verona meeting, Random transformations and invariance in stochastic dynamics, with special thoughts for S.~Albeverio and S.~Ugolini. This work was supported by the French government, managed by the ANR under the UCA JEDI Investments for the Future project, reference number ANR-15-IDEX-01.
\end{acknowledgement}

%%%%%%%%%%%%%%%%%%%%%%%%%%%%%%%%%%%%%%%%%%%%%%%
%%%%%%%%%%%%%%%%%%%%%%%%%%%%%%%%%%%%%%%%%%%%%%%
%%%%%%%%%%%%%%%%%%%%%%%%%%%%%%%%%%%%%%%%%%%%%%%

\section{Karandikar's axioms and quasi-shuffle algebras} 
\label{sec:Itocalc}

%%%%%%%%%%%%%%%%%%%%%%%%%%%%%%%%%%%%%%%%%%%%%%%
%%%%%%%%%%%%%%%%%%%%%%%%%%%%%%%%%%%%%%%%%%%%%%%

\subsection{It\^o calculus for semimartingales} 
\label{ssec:Itocalc}

This section discusses the formal properties of the integral calculus for semimartingales. Protter's textbook \cite{Protter2005} will serve as the standard reference on stochastic integration. The central aim is to feature Karandikar's axioms for matrix-valued It\^o integrals and the corresponding notion of non-commutative quasi-shuffle algebra\footnote{Karandikar's axioms appeared in a 1982 paper \cite{Karandikar1982a}. They were (re-)discovered almost two decades later in a completely different context --the one of Stasheff polytopes-- as axioms for dendriform trialgebras \cite{LodayRonco2001}.}.     

Recall first It\^o's integration by parts formula for scalar semimartingales $X,Y$ \cite[chapter II.6]{Protter2005}
\begin{equation}
\label{eq:Ito1}
	X_tY_t= \int_0^tX_{s^-}dY_s + \int_0^tY_{s^-}dX_s + [X , Y]_t.
\end{equation}
Here as well as in the rest of the paper, our conventions are in place, that is, we assume that $X_0=Y_0=0$. Equation \eqref{eq:Ito1} defines the so-called quadratic covariation bracket, $[X , Y]_t$, the extra term that distinguishes It\^o's integration by parts formula from the classical one.

To deal with matrix-valued semimartingales, one has to take into account the non-commutativity of matrix multiplication. The product in the second term on the righthand side of \eqref{eq:Ito1} is then in the wrong order. Following Protter (and Karandikar), we introduce left and right stochastic integrals
\begin{equation}
\label{eq:leftright}
	(X \succ Y)_t:=\int_0^tX_{s^-}dY_s \qquad (X \prec Y)_t:=\int_0^tdX_sY_{s^-}.
\end{equation}
It\^o's integration by parts formula for matrix-valued semimartingales $X,Y$ writes then \cite[chapter V.8, Thm.~47]{Protter2005}
\begin{equation}
\label{eq:Ito2}
	X_tY_t = (X \succ Y)_t + (X \prec Y)_t + [X , Y]_t,
\end{equation}
Notice that in the case of scalar-valued semimartingales we have that $X\prec Y=Y\succ X$ such that \eqref{eq:Ito2} can be changed back to \eqref{eq:Ito1}.  

\begin{remark}\label{rmk:ProtterNotation}
Protter and Karandikar use a different notation: $(X \succ Y)_t=(X \cdot Y)_t=$ and $(X \prec Y)_t=(X : Y)_t$. Our notation is in line with the one used in algebra. It is also convenient to identify the time-ordering of operations, see \cite{Nacer2020}.
\end{remark}

Hereafter, we account for various properties of (left and right) stochastic integrals of scalar-valued semimartingales. Even though some of  the ternary formulas considered may seem redundant in the scalar case, we emphasise that they do not involve permutations of the variables. They thus hold immediately for ($n \times n$) matrix-valued semimartingales. 

Recall first \cite[Chapter II.6, Thm.~29]{Protter2005} that for $H,K$ adapted processes with caglad (left continuous with right limits) paths and $X,Y$ two semimartingales we have $[\int_0^tH_sdX_s,\int_0^tK_sdY_s]_t=\int_0^tH_sK_sd[X,Y]_s$. 

Assuming now that $L$ and $M$ are semimartingales, we get $[L\succ X,M\succ Y]=(LM)\succ [X,Y]$ which simplifies to the ternary relation
$$
	[L\succ X, Y]=L\succ [X,Y].
$$
Similarly, $[X\prec Y,Z]=[X,Y\succ Z]$ and $[X,Y]\prec Z=[X,Y\prec Z]$. 

Other ternary relations satisfied by semimartingales follow directly from standard properties and from the definitions: 
$$
	(X \succ (Y\succ Z))_t
	=\int_0^tX_{s^-}d(\int_0^tY_{s^-}dZ_s)
	=\int_0^tX_{s^-}Y_{s^-}dZ_s=((XY)\succ Z)_t.
$$
Similarly, for $G,H$ caglad and $Y$ a semimartingale, $\int_0^tG_{s}d(\int_0^tH_{s}dY_s)=\int_0^tG_{s}H_{s}dY_s$ \cite[Thm. II.19]{Protter2005}. Dually, $(X\prec Y)\prec Z=X\prec (YZ)$. 
Finally,
\begin{equation}
((X\succ Y)\prec Z)_t=\int\limits_0^t(d\int\limits_0^sX_{u^-}dY_u)Z_{s^-}=\int\limits_0^tX_{s^-}dY_sZ_{s^-}=(X\succ (Y\prec Z))_t.
\end{equation}
Notice that the associativity of the quadratic covariation bracket, $\left[X,\left[Y,Z\right]\right] = \left[\left[X,Y\right],Z\right],$ can be deduced from the associativity of the usual product of semimartingales together with the previous identities:
\allowdisplaybreaks
\begin{align*}
	\lefteqn{[[X,Y],Z]
	=[XY-X\prec Y-X\succ Y,Z]}\\
	&=(XY)Z- (XY)\prec Z-(XY)\succ Z-[X\prec Y+X\succ Y,Z]\\
	&=X(YZ)- X\prec (YZ)-X\succ (Y\prec Z)-X\succ (Y\succ Z)\\
	&\qquad -X\succ [Y,Z]-[X,Y\prec Z]-[X,Y\succ Z]\\
	&=[X,YZ]-[X,Y\prec Z]-[X,Y\succ Z]=[X,[Y,Z]].
\end{align*}

Note that, as mentioned before, $X,Y$ and $Z$ appear always in the same order in the above formulas. This implies that they hold in the matrix-valued case. In that case, $[X ,Y]$ is defined in terms of the component-wise quadratic covariation brackets, $[X ,Y]_{ik} = \sum_{j=1}^n [X_{ij},Y_{jk}]$, and similarly for the other products. Putting this together, yields the axiomatisation of It\^o calculus for semimartingales, due to Karandikar.

\begin{theorem}[Karandikar, \cite{Karandikar1982a}]\label{thm:quasishufIto} 
The left- and right stochastic It\^o integrals satisfy Karandikar's identities for matrix-valued semimartingales $X,Y,Z$
\begin{align}
\label{qsax1}	(X \prec Y) \prec Z  &= X \prec (YZ)\\		
\label{qsax2}	X \succ (Y \succ Z)  &= (XY) \succ Z\\	
\label{qsax3}	(X \succ Y) \prec Z  &= X \succ (Y \prec Z)\\
\label{assoc}	(X Y) Z  &= X (Y Z)\\
\label{qsax5}	\left[X \succ Y, Z\right]  &= X \succ \left[Y , Z\right]\\  
\label{qsax6}	\left[X \prec Y, Z\right]  &= \left[X,Y \succ Z\right]\\
\label{qsax7}	\left[X , Y\right] \prec Z  &= \left[X, Y \prec Z\right].
\end{align}
\end{theorem}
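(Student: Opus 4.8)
The plan is to organise the seven identities into two families and, in both cases, to first verify the identity for scalar-valued semimartingales using only the standard calculus of stochastic integrals, and then lift it verbatim to the $n\times n$ matrix setting by an entrywise computation which is legitimate precisely because none of the identities permutes $X$, $Y$ or $Z$.

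For the scalar case, each identity is already contained in the computations preceding the statement. Identity \eqref{qsax2} is the associativity of iterated left integrals, $\bigl(X\succ(Y\succ Z)\bigr)_t=\int_0^tX_{s^-}Y_{s^-}\,dZ_s=\bigl((XY)\succ Z\bigr)_t$, which rests on the substitution rule $\int_0^tG_s\,d\bigl(\int_0^\cdot H_u\,dY_u\bigr)_s=\int_0^tG_sH_s\,dY_s$ for caglad integrands \cite[Thm.~II.19]{Protter2005}; identity \eqref{qsax1} is its mirror image, obtained by integrating on the right; identity \eqref{qsax3} is the chain of equalities $\bigl((X\succ Y)\prec Z\bigr)_t=\int_0^tX_{s^-}\,dY_s\,Z_{s^-}=\bigl(X\succ(Y\prec Z)\bigr)_t$; and \eqref{assoc} is just associativity of the (matrix, here scalar) product. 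The bracket relations \eqref{qsax5}--\eqref{qsax7} all come from \cite[Chapter~II.6, Thm.~29]{Protter2005}: writing each side as a stochastic integral of a caglad process against a semimartingale and applying $\bigl[\int H\,dV,\int K\,dW\bigr]_t=\int_0^tH_sK_s\,d[V,W]_s$ turns \eqref{qsax5} into $[L\succ X,Y]=L\succ[X,Y]$, \eqref{qsax6} into the common value $\int_0^tY_{s^-}\,d[X,Z]_s$ for both $[X\prec Y,Z]$ and $[X,Y\succ Z]$, and \eqref{qsax7} into the common value $\int_0^tZ_{s^-}\,d[X,Y]_s$ for both $[X,Y]\prec Z$ and $[X,Y\prec Z]$.

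For the passage to matrices I would expand both sides of each identity entrywise, using $(A\succ B)_{ik}=\sum_jA_{ij}\succ B_{jk}$, $(A\prec B)_{ik}=\sum_jA_{ij}\prec B_{jk}$ and $[A,B]_{ik}=\sum_j[A_{ij},B_{jk}]$ together with the bilinearity of the scalar operations $\prec$, $\succ$ and $[\cdot,\cdot]$. Since the arguments occur in the same order on both sides, distributing the sums makes the $(i,m)$-entry of each side a sum, over the intermediate indices, of the corresponding scalar identity evaluated on a fixed triple of scalar entries; for example both sides of \eqref{qsax3} have $(i,m)$-entry $\sum_{j,l}(X_{ij}\succ Y_{jl})\prec Z_{lm}=\sum_{j,l}X_{ij}\succ(Y_{jl}\prec Z_{lm})$, and both sides of \eqref{qsax5} have $(i,k)$-entry $\sum_{j,l}[X_{il}\succ Y_{lj},Z_{jk}]=\sum_{j,l}X_{il}\succ[Y_{lj},Z_{jk}]$. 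Summing the scalar identities term by term yields the matrix identities.

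I do not expect a genuine obstacle, the argument being essentially bookkeeping; the two points that need a little attention are (a) checking that the scalar processes occurring inside the brackets, such as the entries $(X\succ Y)_{ij}$, are themselves semimartingales so that \cite[Chapter~II.6, Thm.~29]{Protter2005} applies — which holds since a stochastic integral of a caglad adapted process against a semimartingale is again a semimartingale — and (b) keeping the index conventions consistent throughout, so that the non-commutativity of matrix multiplication is never silently violated.
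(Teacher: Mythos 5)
Your proposal is correct and follows essentially the same route as the paper: verify each ternary identity for scalar semimartingales using the substitution rule \cite[Thm.~II.19]{Protter2005} and the bracket formula $\bigl[\int H\,dX,\int K\,dY\bigr]=\int HK\,d[X,Y]$ \cite[Chapter~II.6, Thm.~29]{Protter2005}, then pass to matrices by observing that $X,Y,Z$ occur in the same order on both sides so the identities transfer component-wise. Your explicit index bookkeeping merely spells out what the paper states as ``they thus hold immediately for $(n\times n)$ matrix-valued semimartingales.''
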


\begin{remark}
Karandikar considered in \cite{Karandikar1982a} the continuous case. Axiom \ref{qsax6} is stated in a slightly different way -- namely in the case where $Y$ is non-singular \cite[Eq.~(9) p.~1089]{Karandikar1982a}. However, these restrictions (to the continuous case and to non-singular $Y$) are not necessary, see Karandikar \cite{Karandikar1991}.
\end{remark}

\begin{definition}\label{def:quasi-shuffle} An associative space $A$ equipped with three products $\prec, \succ$ and $[\ ,\ ]$, called respectively the left half-shuffle, the right half-shuffle and the bracket, such that $XY=X\prec Y+X\succ Y+[X,Y]$ and satisfying Karandikar's identities in Theorem \ref{thm:quasishufIto} is called a quasi-shuffle algebra. 
\end{definition}

See Remark \ref{termQS} below for an explanation of the terminology. Karandikar's axiomatisation of It\^o calculus therefore says that the algebra of semimartingales is a non-commutative quasi-shuffle algebra. There are many examples of quasi-shuffle algebras besides the one coming from stochastic calculus and Karandikar's results apply immediately to them. Conversely, general results from abstract quasi-shuffle algebra apply to stochastic calculus. Examples of application of this strategy can be found in our joint works with Simon Malham and Anke Wiese \cite{EMPW2015a,EMPW2015b}.

\begin{remark}
We have seen that the associativity of the bracket operation $[\ ,\ ]$ 
\begin{equation}
	\label{qsax4}\left[X,\left[Y,Z\right]\right] = \left[\left[X,Y\right],Z\right]\\ 
\end{equation}
follows from the associativity of product in the algebra $A$. One can define equivalently a quasi-shuffle algebra to be a vector space $A$ equipped with three products $\prec, \succ$ and $[\ ,\ ]$ satisfying Equations (\ref{qsax1}-\ref{qsax7}) and \eqref{assoc} replaced by (\ref{qsax4}), where one sets $XY=X\prec Y+X\succ Y+[X,Y]$.
The associativity of the product $XY$ results then automatically from these axioms (the proof parallels the proof showing that the quadratic covariation bracket is associative for semimartingales):
\begin{align*}
	\lefteqn{(XY)Z
			=(XY)\prec Z+(XY)\succ Z+[X\prec Y+X\succ Y+[X,Y],Z]}\\
	&=X\prec (YZ)+X\succ (Y\prec Z)+X\succ (Y\succ Z) \\
	&\quad +[X,Y\prec Z]+[X,Y\succ Z]+X\succ [Y,Z]+[X,[Y,Z]]\\
	&=X\prec (YZ)+X\succ (Y\prec Z)+X\succ (Y\succ Z)+X\succ [Y,Z]+[X,YZ]=X(YZ).
\end{align*}
\end{remark}

\begin{remark}\label{termQS} The terminology ``quasi-shuffle'' algebra is used in algebra and combinatorics. It reflects the close similarity with the classical shuffle algebra \cite{Reutenauer1993}. The work \cite{FPT2016} explores the relation between the two families of algebras from a deformation theoretical viewpoint. Hoffman \cite{Hoffman2000}, independently of Karandikar's seminal work, largely initiated the development of the Hopf algebraic theory of commutative quasi-shuffle algebra. However, as we mentioned in the introduction, Gaines \cite{Gaines1994} as well as Hudson et al.~\cite{Beasley1995,Hudson2009,Hudson2012} introduced quasi-shuffle products to study properties of products of iterated It\^o integrals. We note that Cartier, back in 1972 \cite{Cartier2011}, used a quasi-shuffle product in the construction of free (Rota-)Baxter algebra. 

When dealing with scalar-valued semimartingales, we already noticed that $X\prec Y=Y\succ X$. Correspondingly, Karandikar's axioms simplify, leading to the notion of commutative quasi-shuffle algebra studied in detail by Hoffman, see \cite{FP2020} for a recent account. Some extra properties are then available such as Hoffman's isomorphism linking shuffle and quasi-shuffle products. Features of the commutative theory have been exploited recently in stochastic calculus, for example in \cite{CEFMW2014,EMPW2015b,Friedrich2016}.

From a purely algebraic viewpoint, the fundamental example of a quasi-shuffle algebra is the linear span of words $X=x_1\cdots x_n$ where the $x_i$ belong to a monoid $M$ with (not necessarily commutative) product denoted $\times$. The axioms for the products, $\prec$, $\succ$, and $[\ ,\ ]$, used to define inductively the associative product $X\ast Y:=X\prec Y+X\succ Y+[X ,Y]$ of words $X,Y$, are given by:
\begin{align*}
	x_1\cdots x_n \prec y_1\cdots y_m&:=x_1(x_2\cdots x_n\ast y_1\cdots y_m)\\
	x_1\cdots x_n \succ y_1\cdots y_m&:=y_1(x_1\cdots x_n\ast y_2\cdots y_m)\\
	[x_1\cdots x_n , y_1\cdots y_m]&:=(x_1\times y_1)(x_2\cdots x_n\ast y_2\cdots y_m).
\end{align*}
For example, taking $M$ to be the monoid of the integers, we have the quasi-shuffle product 
$$
	2\ 3\prec 1=2(3\ast 1)=2(3\prec 1+3\succ 1+[3,1])=2\ 3\ 1+2\ 1\ 3+2\ 4.
$$
\end{remark}

\begin{remark}
In the following, quasi-shuffle algebra shall mean non-commutative quasi-shuffle algebra. The latter are also called tridendriform algebras in the literature. However, the quasi-shuffle terminology, besides being close to other ones that have been used in stochastics (modified shuffle product, sticky shuffle product, ...) has the advantage of underlining the connection to the familiar shuffle calculus for Chen's iterated integrals and the related product of simplices, as well as many more topics (such as quasi-symmetric functions, multizeta values, etc.). We refer to \cite{EM2014,FP2020,NPT2013,Novelli2006} for accounts on the combinatorial theory of quasi-shuffle algebras as well as further bibliographical references and various examples. 
See \cite{CEFMW2014,CKP2018,EMPW2015a,EMPW2015b,Gaines1994,Hudson2009,Hudson2012} and references therein for more details and references on quasi-shuffle calculus in probability. 
\end{remark}

We introduce now Rota--Baxter algebras, which provide a more general approach to the algebraic axiomatisation of integral calculus \cite{Rota1998} and therefore an important class of examples for quasi-shuffle algebras. Indeed, Theorem \ref{thm:RBqs} below shows that any Rota--Baxter algebra is a quasi-shuffle algebra \cite{K2002}. We refer to the survey \cite{Nacer2020} for further details and references about Rota--Baxter algebras and their use in integral calculus, probability theory, renormalisation in perturbative quantum field theory and classical integrable systems. 

\begin{definition}\label{def:RBA} 
A {\rm{Rota--Baxter algebra of weight $\theta \in k$}} consists of an associative $k$-algebra $A$ equipped with a linear operator $R \colon A \to A$ satisfying the {\rm{Rota--Baxter relation of weight $\theta$}}:
\begin{equation}
\label{RBR}
	R(x)R(y)=R\big(R(x)y+xR(y)+\theta xy \big) \qquad \forall x,y \in A.
\end{equation}
\end{definition}
Note that if $R$ is a Rota--Baxter map of weight $\theta$, then the map $R':=\beta R$ for $\beta \in k$ different from zero is of weight $\beta \theta$. This permits to rescale the original weight  $\theta \neq 0$ to the standard weight $\theta' = +1$ (or $\theta' = -1$). The argument of the map $R$ on the righthand side of \eqref{RBR} consists of a sum of three terms; one can show that it defines a new associative product on $A$. 

\begin{definition}[Rota--Baxter product]
The Rota--Baxter associative product is defined by
\begin{equation}
\label{doubleasso}
	x \ast_{\!\scriptscriptstyle{\theta}} y := R(x)y + xR(y) + \theta xy.
\end{equation}
\end{definition}

The Rota--Baxter relation originated in the work of the mathematician Glen Baxter \cite{Baxter1960}. Rota \cite{Rota1969,Rota1995} followed by Cartier \cite{Cartier1972} made important contributions to the algebraic foundations of Baxter's work, among others, by providing different constructions of free commutative objects. The idea of quasi-shuffle product is actually often traced back to Cartier's 1972 article.

\begin{theorem}[\cite{K2002}]\label{thm:RBqs} 
Assume now that $\theta=1$. Writing $a \cdot b:=ab$ for the usual associative product on $A$ and setting $a\prec b:=aR(b)$ and $a\succ b:=R(a)b$, so that $\ast:=\prec +\succ +\ \cdot$, then the quasi-shuffle algebra identities hold:
\begin{equation}
\label{quasishuffleNC}
\begin{tabular}{ l l }
	$(a \prec b) \prec c = a \prec (b\ast c)$, 		&\quad $(a \succ b) \cdot c = a \succ (b \cdot c)$ \\  
	$a \succ (b \succ c) = (a\ast b) \succ c$, 		&\quad  $(a \prec b) \cdot c = a \cdot (b \succ c)$ \\
	$(a \succ b) \prec c = a \succ (b \prec c)$, 	&\quad  $(a \cdot b) \prec c = a \cdot (b \prec c)$. 
\end{tabular}
\end{equation}
\end{theorem}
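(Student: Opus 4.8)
The plan is to verify each of the six quasi-shuffle identities in \eqref{quasishuffleNC} by direct substitution of the definitions $a\prec b:=aR(b)$, $a\succ b:=R(a)b$, $a\cdot b:=ab$ and $a\ast b:=aR(b)+R(a)b+ab$, using only the associativity of the underlying product on $A$ and the Rota--Baxter relation \eqref{RBR} with $\theta=1$, namely $R(x)R(y)=R\bigl(R(x)y+xR(y)+xy\bigr)=R(x\ast y)$. The key observation that organises the whole computation is precisely that last reformulation: for weight $1$, the Rota--Baxter relation says $R(x)R(y)=R(x\ast y)$, so $R$ becomes an algebra morphism from $(A,\ast)$ to $(A,\cdot)$. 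Four of the six identities never touch $R$'s defining relation at all and follow from associativity alone; only the two that expand a product of two $R$-images will invoke \eqref{RBR}.

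First I would dispatch the four "easy" identities. For $(a\succ b)\cdot c=a\succ(b\cdot c)$: the left side is $(R(a)b)c$ and the right side is $R(a)(bc)$, equal by associativity. Identically, $(a\prec b)\cdot c=(aR(b))c=a(R(b)c)=a\cdot(b\succ c)$ gives the fourth identity, $(a\cdot b)\prec c=(ab)R(c)=a(bR(c))=a\cdot(b\prec c)$ gives the sixth, and $(a\succ b)\prec c=(R(a)b)R(c)=R(a)(bR(c))=a\succ(b\prec c)$ gives the fifth --- all pure associativity.

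Then I would treat the two identities that genuinely use the Rota--Baxter axiom. For $(a\prec b)\prec c=a\prec(b\ast c)$: the left side is $(aR(b))R(c)=a\,(R(b)R(c))=a\,R(b\ast c)$ by \eqref{RBR} in the form $R(b)R(c)=R(b\ast c)$; the right side is $a\,R(b\ast c)$ by definition of $\prec$. Dually, for $a\succ(b\succ c)=(a\ast b)\succ c$: the left side is $R(a)\bigl(R(b)c\bigr)=\bigl(R(a)R(b)\bigr)c=R(a\ast b)\,c$, again by \eqref{RBR}, and the right side is $R(a\ast b)\,c$ by definition of $\succ$. This closes all six cases.

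There is essentially no obstacle here: the content of the theorem is the correct bookkeeping of parentheses plus the single identity $R(x)R(y)=R(x\ast y)$, and the mild subtlety is only to notice that this rephrasing of \eqref{RBR} with $\theta=1$ is exactly what is needed. (For completeness one could also remark that $\ast$ is associative on $A$ --- this is standard for the Rota--Baxter double product \eqref{doubleasso} --- so that the expressions $a\prec(b\ast c)$ and $(a\ast b)\succ c$ are unambiguous; but this is not even required for the verification above, since each identity is checked as a bare equality of elements of $A$.) Comparing \eqref{quasishuffleNC} with the axioms \eqref{qsax1}--\eqref{assoc} of Theorem~\ref{thm:quasishufIto}, one reads off that $\prec,\succ$ and $\cdot$ make $A$ a quasi-shuffle algebra in the sense of Definition~\ref{def:quasi-shuffle}, with the bracket given by the plain product $[\,,\,]=\cdot$.
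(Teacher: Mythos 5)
Your verification is correct: the four identities not involving $R$'s defining relation are indeed pure associativity, and the two remaining ones follow from the weight-one Rota--Baxter relation rewritten as $R(x)R(y)=R(x\ast y)$, which is exactly the organising observation. The paper itself gives no proof of this theorem (it defers to \cite{K2002}), but your direct substitution argument is the standard one, and your closing identification of the bracket with the plain product $\cdot$ correctly matches the six identities to the axioms (\ref{qsax1})--(\ref{qsax7}).
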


\begin{remark}
Without the normalisation to the standard weight, one obtains an example of a $\theta$-quasi-shuffle algebra, studied in greater detail in \cite{BR2010}. See also \cite{HoffmanIhara2016}. 
\end{remark}

\begin{example}[Fluctuation theory] Baxter's work was motivated by problems in the theory of  fluctuations \cite{Rota1972}. The latter deals with extrema of sequences of real valued random variables. Their distribution can be studied using operators on random variables such as $X\to X^+:=\max(0,X)$. This motivates to define the operator 
$$
 	R(F)(t):={\mathbf E}[\exp({itX^+})]
$$
on characteristic functions $F(t):={\mathbf E}[\exp({itX})]$ of real valued random variables $X$, which is a Rota--Baxter map of weight $\theta=1$. 
\end{example} 

\begin{example}[Finite summation operators]
On functions $f$ defined on $\mathbb{N}$ and with values in an associative algebra $A$, the summation operator $R(f)(n):=\sum_{k=0}^{n-1}f(k)$ is a Rota--Baxter map of weight one. It is the right inverse of the finite difference operator $\Delta(f)(n):=f(n+1) - f(n)$.       
\end{example}

\begin{remark}[Shuffle algebras in classical calculus]
Before concluding this section, we apply the previous ideas to the case of deterministic matrix-valued semimartingales. Even in that seemingly simple case the relations put forward by Karandikar prove to be interesting and useful.

We consider for example the algebra $A$ of matrices whose entries are continuous functions of finite variation. Then, since the quadratic covariation bracket vanishes, Karandikar's identities reduce to an algebra equipped with an associative product $XY=X \prec Y+X \succ Y$ and 
\begin{align}
\label{shuffle}
\begin{split}
	(X \prec Y) \prec Z &= X \prec (YZ) 		\\
	X \succ (Y \succ Z) &= (XY) \succ Z		\\
	(X \succ Y) \prec Z &= X \succ (Y \prec Z).
\end{split}
\end{align}
Our previous arguments show that the associativity of the product $XY$ can be recovered formally from these identities. These relations have been used first by Eilenberg and MacLane to give an abstract proof of the associativity of the shuffle product of simplices in topology. 

\begin{definition}\label{shu}
The three identities \eqref{shuffle} define the structure of non-commutative {\rm shuffle algebra} (aka dendriform algebra).
\end{definition}

From Theorem \ref{thm:RBqs} is clear that on a Rota--Baxter algebra of weight $\theta=0$ one can define left and right half-shuffle products satisfying the three identities \eqref{shuffle}. We refer to \cite{Nacer2020} for a survey and more details and applications in classical integral calculus as well as general references on the subject. We will come back to these relations later as they encode the algebra structure underlying Stratonovich calculus for semimartingales.
\end{remark}

%%%%%%%%%%%%%%%%%%%%%%%%%%%%%%%%%%%%%%%%%%%%%%%
%%%%%%%%%%%%%%%%%%%%%%%%%%%%%%%%%%%%%%%%%%%%%%%

\subsection{Singular quasi-shuffle algebras and Stratonovich calculus}
\label{ssec:singular}

In this section we extend Karandikar's axiomatisation beyond the setting of continuous semimartingales. Namely, we include in the algebraic description of It\^o calculus the decomposition into continuous and jump parts of the quadratic covariation bracket \cite[Chapter II.6]{Protter2005}. 

We write $\Delta(X)$ for the process of jumps of a semimartingale $X$, i.e., $\Delta(X)_s=(X-X_-)_s$, and introduce the corresponding decomposition of the bracket into continuous and jump parts,
$[X,Y]=[X,Y]^c+[X,Y]^j$. The definition extends from the scalar-valued to the continuous matrix-valued case components-wise. For scalar-valued processes, $[X,X]_t^j=\sum_{0\leq s\leq t}(\Delta(X)_s)^2$, a term that appears frequently in stochastic calculus, for example,  in the study of the stochastic or Dol\'eans-Dade exponential \cite[chapter II.8, Thm.~37]{Protter2005}. A semimartingale $X$ is called quadratic pure jump if $[X,X]=[X,X]^j$.

Recall first a fundamental property of $\Delta$ acting on scalar-valued processes. %(where we identity two indistinguishable processes):
%\begin{enumerate}
%\item For $H$ caglad and $X$ a semimartingale, $\Delta(H\cdot X)=H\Delta(X)$ \cite[II, Thm 13]{Protter2005}, so that for $Y$ a semimartingale, $\Delta(Y\succ X)=Y_-X$, or $$\Delta(Y\succ X)=(Y-\Delta(Y))X.$$
%\item For $X$ and $Y$ two semimartingales \cite[Thm II, 23]{Protter2005}, $$\Delta[X,Y]=\Delta X\Delta Y.$$ 
%\item 
Since the bracket of two semimartingales has paths of finite variation on compact sets \cite[chapter II, Cor.1]{Protter2005}, it is a quadratic pure jump semimartingale, that is, $[[X,Y],[X,Y]]=[[X,Y],[X,Y]]^j$ by \cite[chapter II.6, Thm.~26]{Protter2005}. This implies by \cite[chapter II.6.~Thm. 28]{Protter2005} that for arbitrary semimartingales $X,Y,Z$ we have $[[X,Y],Z]=\sum_{0\leq s\leq t}\Delta([X,Y])_s\Delta(Z)_s.$ In particular, 
\begin{align*}
	[[X,Y]^c,Z]&=[[X,Y]^c,Z]^c=[[X,Y]^c,Z]^j=0,\\
	[[X,Y],Z]^c&=[[X,Y]^j,Z]^c=0,\\
	[[X,Y],Z]    &=[[X,Y]^j,Z]=[[X,Y]^j,Z]^j.
\end{align*}
As a corollary, we notice for further use that for continuous semimartingales $[[X,Y],Z]=0$. These identities hold for matrix-valued semimartingales (since the splitting of processes into a continuous and a pure jump part is linear -- it commutes with taking linear combinations of brackets).

A full axiomatisation of It\^o calculus taking into account such phenomena would require the introduction of the operator $\Delta$, those identities, and most likely other aspects of standard stochastic calculus. We propose a lighter version that provides an axiomatic framework allowing to relate formally It\^o and Stratonovich calculi.

\begin{definition}
A singular quasi-shuffle algebra is a quasi-shuffle algebra, $(A,\succ,\prec,[-,-])$, such that the associative bracket splits into $[-,- ]=[-,- ]^c+[-,- ]^j$ and furthermore the following relations hold:
\begin{align}
\label{qsax8} [[X,Y]^c,Z]^c  &=  [X,[Y,Z]^c]^c=0\\
\label{qsax9} [[X,Y]^c,Z]^j  &=  [X,[Y,Z]^c]^j=0\\
\label{qsax10} [[X,Y]^j,Z]^c &=  [X,[Y,Z]^j]^c=0 
\end{align}
\end{definition}
Notice that we also have then
$$[X\prec Y,Z]^c=[X,Y\succ Z]^c,\ 
 [X\prec Y,Z]^j=[X,Y\succ Z]^j.$$

Recall that for matrix-valued semimartingales, the (left/right) Fisk--Stratonovich integrals are defined in terms of the It\^o integral by 
\begin{align}
\label{eq:FStratoLeft}
	(X \succcurlyeq Y)_t:=\int_0^t X_s \circ\!dY_s &:= \int_0^t X_s dY_s + \frac{1}{2} [X, Y]^c_t\\
\label{eq:FStratoRight}
	(Y \preccurlyeq X)_t:=\int_0^t \circ dY_s X_s &:= \int_0^t dY_s X_s + \frac{1}{2} [Y, X]^c_t	.
\end{align}
Formally, in any singular quasi-shuffle algebra one can define the two products
$$
	X \succcurlyeq Y:=X\succ Y+\frac{1}{2}[X,Y]^c,
	\qquad
	X \preccurlyeq Y:=X\prec Y+\frac{1}{2}[X,Y]^c.
$$
Then the integration by parts rule reads:
\begin{equation}
	XY=X \succcurlyeq Y+X \preccurlyeq Y+[X,Y]^j.
\end{equation}
Unfortunately, it seems to be difficult to find a simpler axiomatic framework than the one of singular quasi-shuffle algebras to account for Stratonovich calculus in the presence of jumps. Indeed, it is likely that a meaningful system of ternary relations involving only $\succcurlyeq$, $ \preccurlyeq$ and $[\ ,\ ]^j$ is unavailable. Fortunately, these issues simplify considerably for continuous semimartingales.

%%%%%%%%%%%%%%%%%%%%%%%%%%%%%%%%%%%%%%%%%%%%%%%
%%%%%%%%%%%%%%%%%%%%%%%%%%%%%%%%%%%%%%%%%%%%%%%

\subsection{Shuffle algebra and continuous semimartingales}
\label{ssec:contsemim}

As we mentioned previously, stochastic integration simplifies dramatically when considering continuous semimartingales. The reason for this should be clear from our previous developments, that is, the jump part, $[- , -]^j$, of the bracket, $[- , -]$, vanishes, so that the latter reduces to its continuous part and becomes nilpotent of order 3: $[X,[Y,Z]]=[[X,Y],Z]=0$. In that situation, the axioms of It\^o calculus rewrite: 

\begin{lemma}
Continuous matrix-valued semimartingales equipped with the left and right half-shuffles and the covariation bracket obey the axioms (\ref{qsax1})-(\ref{qsax7}) together with
\begin{equation}
	\label{qsax12} [X,[Y,Z]]=[[X,Y],Z]=0
\end{equation}
\end{lemma}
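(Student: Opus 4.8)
This final lemma is essentially immediate given everything already established, so the proof is very short.

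The plan is to simply invoke the facts already proved in the preceding two subsections. First, Theorem~\ref{thm:quasishufIto} (Karandikar) gives that matrix-valued semimartingales, and hence in particular continuous ones, satisfy the identities (\ref{qsax1})--(\ref{qsax7}) with $\prec$, $\succ$ and $[\ ,\ ]$ as defined there. So it only remains to establish the extra relation (\ref{qsax12}) for continuous semimartingales.

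Second, for the nilpotency relation (\ref{qsax12}) I would recall the computation carried out just before the definition of singular quasi-shuffle algebras: since the bracket $[X,Y]$ of two semimartingales has paths of finite variation on compacts (\cite[chapter II, Cor.~1]{Protter2005}), it is a quadratic pure jump semimartingale, whence \cite[chapter II.6, Thm.~28]{Protter2005} yields $[[X,Y],Z]=\sum_{0\leq s\leq t}\Delta([X,Y])_s\Delta(Z)_s$. When all of $X,Y,Z$ are continuous this sum is empty, so $[[X,Y],Z]=0$; and by associativity of the bracket (established in the excerpt from associativity of the product and the ternary identities) also $[X,[Y,Z]]=[[X,Y],Z]=0$. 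As already noted in the text, this passes to the matrix-valued case because the decomposition of processes into continuous and jump parts is linear and thus commutes with forming the component-wise brackets $[X,Y]_{ik}=\sum_{j=1}^n[X_{ij},Y_{jk}]$.

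There is no real obstacle here: the only mild point to be careful about is to make explicit that, in the continuous matrix-valued setting, continuity of the entries of $X,Y,Z$ implies continuity of the entries of $[X,Y]$ (so that $\Delta([X,Y])=0$ entry-wise), which is exactly the linearity remark recalled above. I would therefore just write:

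\begin{proof}
By Theorem~\ref{thm:quasishufIto}, continuous matrix-valued semimartingales, being in particular matrix-valued semimartingales, satisfy the identities (\ref{qsax1})--(\ref{qsax7}). It remains to check (\ref{qsax12}). For scalar-valued semimartingales $X,Y,Z$, the bracket $[X,Y]$ has paths of finite variation on compact sets \cite[chapter II, Cor.~1]{Protter2005}, hence is a quadratic pure jump semimartingale, so that \cite[chapter II.6, Thm.~28]{Protter2005} gives $[[X,Y],Z]_t=\sum_{0\leq s\leq t}\Delta([X,Y])_s\Delta(Z)_s$. When $X,Y,Z$ are continuous, so is $[X,Y]$, and this sum vanishes identically; therefore $[[X,Y],Z]=0$, and by associativity of the bracket also $[X,[Y,Z]]=[[X,Y],Z]=0$. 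Passing to the matrix-valued case, the entries of $[X,Y]$ are finite linear combinations $[X,Y]_{ik}=\sum_{j=1}^n[X_{ij},Y_{jk}]$ of scalar brackets, hence continuous whenever the entries of $X,Y$ are; the same argument applied component-wise yields (\ref{qsax12}) for continuous matrix-valued semimartingales.
\end{proof}
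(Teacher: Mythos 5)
Your proof is correct and follows essentially the same route as the paper: the identities (\ref{qsax1})--(\ref{qsax7}) come from Theorem~\ref{thm:quasishufIto}, and the vanishing of the iterated bracket is exactly the corollary noted ``for further use'' in the preceding subsection (the bracket is a quadratic pure jump semimartingale, so $[[X,Y],Z]$ is a sum over jumps, which is empty in the continuous case), extended to matrices by linearity and to $[X,[Y,Z]]$ by associativity of the bracket. Nothing is missing.
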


Notice that the associativity of the product is formally a consequence of these axioms. This observation is of little interest when dealing with stochastic integrals for which the associativity of the product is somehow obvious. However, it is relevant with respect to the axiomatic point of view.

\begin{definition}
A regular quasi-shuffle algebra is a quasi-shuffle algebra such that the bracket satisfies the extra axiom (\ref{qsax12}).
\end{definition}

The continuity hypothesis has more interesting consequences when dealing with Fisk--Stratonovich integrals. We follow closely the exposition in \cite{CKP2018}. The Stratonovich formula is indeed then the usual integration by parts formula
\begin{equation}
\label{eq:Strato1}
	X_tY_t = (X \succcurlyeq Y)_t + (X \preccurlyeq Y)_t,
\end{equation}

The classical statement that the Stratonovich integral for continuous semimartingales obeys the usual laws of calculus translates formally into the

\begin{theorem}\label{thm:shuffleStrato}
For continuous semimartingales $X,Y,Z$, the left and right Fisk--Strato- novich integrals satisfy the half-shuffle identities 
\begin{align}
\label{QS1a} (X \preccurlyeq Y) \preccurlyeq Z &= X \preccurlyeq (YZ) \\
\label{QS2a} (X \succcurlyeq Y) \preccurlyeq Z &= X \succcurlyeq (Y \preccurlyeq Z)\\
\label{QS3a} X \succcurlyeq (Y \succcurlyeq Z) &= (XY) \succcurlyeq Z.
\end{align}
In particular, the algebra of continuous matrix-valued semimartingales is a non-commutative shuffle algebra.
\end{theorem}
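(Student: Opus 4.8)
The plan is to derive the three Fisk--Stratonovich half-shuffle identities \eqref{QS1a}--\eqref{QS3a} purely formally from the axioms of a regular quasi-shuffle algebra, by substituting the definitions $X \succcurlyeq Y = X \succ Y + \tfrac12[X,Y]$ and $X \preccurlyeq Y = X \prec Y + \tfrac12[X,Y]$ (recall that for continuous semimartingales $[-,-]^j = 0$, so $[-,-]^c = [-,-]$), expanding everything in terms of $\prec$, $\succ$ and $[-,-]$, and then collecting terms and invoking the Karandikar identities \eqref{qsax1}--\eqref{qsax7} together with the nilpotency \eqref{qsax12}. Since $[X,[Y,Z]] = [[X,Y],Z] = 0$, any term containing a bracket of a bracket vanishes, which kills the majority of the cross-terms; what survives should match on both sides after using \eqref{qsax5}, \eqref{qsax6}, \eqref{qsax7}. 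Because the underlying algebra of continuous matrix-valued semimartingales is genuinely a regular quasi-shuffle algebra (by the Lemma preceding the theorem), establishing the identities at the abstract level immediately gives the probabilistic statement.

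First I would treat \eqref{QS1a}. Expanding the left-hand side, $(X \preccurlyeq Y)\preccurlyeq Z = (X\prec Y + \tfrac12[X,Y]) \prec Z + \tfrac12[X\prec Y + \tfrac12[X,Y], Z]$. The term $[[X,Y],Z]$ vanishes by \eqref{qsax12}, and $[X \prec Y, Z] = [X, Y\succ Z]$ by \eqref{qsax6}; also $(X\prec Y)\prec Z = X \prec (YZ)$ by \eqref{qsax1} and $[X,Y]\prec Z = [X, Y\prec Z]$ by \eqref{qsax7}. So the left side becomes $X\prec(YZ) + \tfrac12[X, Y\prec Z] + \tfrac12[X, Y\succ Z]$. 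The right-hand side is $X \preccurlyeq (YZ) = X\prec(YZ) + \tfrac12[X, YZ]$, and since $YZ = Y\prec Z + Y\succ Z + [Y,Z]$ while $[X,[Y,Z]] = 0$, we get $\tfrac12[X,YZ] = \tfrac12[X,Y\prec Z] + \tfrac12[X,Y\succ Z]$. The two sides agree. Identity \eqref{QS3a} is handled symmetrically, using \eqref{qsax2}, \eqref{qsax5} and \eqref{qsax6} (the latter rewritten as $[X\prec Y,Z] = [X,Y\succ Z]$, equivalently $[X, Y\succ Z]$ appears on both expansions), and again \eqref{qsax12} to discard the iterated-bracket terms.

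The middle identity \eqref{QS2a} is the one I expect to require the most care, since it mixes $\succcurlyeq$ and $\preccurlyeq$ and hence both a left and a right bracket term appear. Expanding: $(X\succcurlyeq Y)\preccurlyeq Z = (X\succ Y)\prec Z + \tfrac12[X,Y]\prec Z + \tfrac12[X\succ Y, Z] + \tfrac14[[X,Y],Z]$; the last term is zero by \eqref{qsax12}, $(X\succ Y)\prec Z = X\succ(Y\prec Z)$ by \eqref{qsax3}, $[X,Y]\prec Z = [X,Y\prec Z]$ by \eqref{qsax7}, and $[X\succ Y, Z] = X\succ[Y,Z]$ by \eqref{qsax5}. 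On the other side, $X\succcurlyeq(Y\preccurlyeq Z) = X\succ(Y\prec Z) + \tfrac12 X\succ[Y,Z] + \tfrac12[X, Y\prec Z] + \tfrac14[X,[Y,Z]]$; the last term vanishes by \eqref{qsax12}. Comparing, both sides equal $X\succ(Y\prec Z) + \tfrac12 X\succ[Y,Z] + \tfrac12[X, Y\prec Z]$, so \eqref{QS2a} holds. The main obstacle, such as it is, is purely bookkeeping: one must be scrupulous about which Karandikar identity matches which term and about the fact that every bracket-of-a-bracket and every product-inside-a-bracket (via $[X, AB]$ with $AB$ expanded) collapses under \eqref{qsax12}; there is no conceptual difficulty once the nilpotency is in hand. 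Finally, since \eqref{QS1a}--\eqref{QS3a} are exactly the defining relations of a non-commutative shuffle (dendriform) algebra from Definition \ref{shu}, and $XY = X\succcurlyeq Y + X\preccurlyeq Y$ by \eqref{eq:Strato1}, the algebra of continuous matrix-valued semimartingales with $(\succcurlyeq, \preccurlyeq)$ is a non-commutative shuffle algebra, as claimed.
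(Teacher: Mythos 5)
Your proposal is correct and follows essentially the same route as the paper's own proof: expand $\preccurlyeq,\succcurlyeq$ in terms of $\prec,\succ,[-,-]$, kill the $\tfrac14[[X,Y],Z]$ and $[X,[Y,Z]]$ terms by nilpotency, and match the remaining terms via the Karandikar identities \eqref{qsax1}, \eqref{qsax3}, \eqref{qsax5}--\eqref{qsax7}. The paper likewise proves \eqref{QS1a} and \eqref{QS2a} explicitly and dismisses \eqref{QS3a} as analogous, so there is nothing to add.
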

\begin{proof}
\begin{align*}
	\lefteqn{(X \preccurlyeq Y) \preccurlyeq Z= \big(X \prec Y + \frac{1}{2} [X, Y] \big) \preccurlyeq Z}\\ 
		&= (X \prec Y) \prec Z + \frac{1}{2} [X,Y] \prec Z 
			+  \frac{1}{2} [X \prec Y, Z] + \frac{1}{4} [[X, Y], Z] \\
		&= X \prec (YZ) + \frac{1}{2} [X,Y \prec Z] + \frac{1}{2} [X ,Y \succ  Z] \\
		&= X \preccurlyeq (YZ) .
\end{align*}
Identity (\ref{QS3a}) is proved similarly.
\begin{align*}
	\lefteqn{(X \succcurlyeq Y) \preccurlyeq Z = \big(X \succ Y + \frac{1}{2} [X, Y]\preccurlyeq Z }\\
		&= (X \succ Y) \prec Z + \frac{1}{2} [X, Y] \prec Z +  \frac{1}{2} [X \succ Y, Z]+ \frac{1}{4} [[X, Y], Z]\\
		&= X \succ (Y \prec Z) +  \frac{1}{2} [X,Y \prec Z] + \frac{1}{2} X \succ [Y, Z] \\
		&= X \succcurlyeq (Y \preccurlyeq Z).
\end{align*}
\end{proof}
In general, the same argument show 
\begin{theorem}
The map $(A,\prec,\succ,[\ ,\ ])\longmapsto (A,\preccurlyeq,\succcurlyeq)$ is a functor from the category of regular quasi-shuffle algebras to the category of shuffle algebras.
\end{theorem}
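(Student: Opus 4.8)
The plan is to verify separately the two requirements for this assignment to be a functor: on objects, that it sends a regular quasi-shuffle algebra $(A,\prec,\succ,[\ ,\ ])$ to a genuine shuffle algebra $(A,\preccurlyeq,\succcurlyeq)$; and on morphisms, that it sends a morphism of regular quasi-shuffle algebras to a morphism of shuffle algebras, compatibly with composition and identities. The object part is the abstract counterpart of the computation already displayed in the proof of Theorem~\ref{thm:shuffleStrato}: wherever that argument invoked a concrete property of continuous semimartingales one invokes instead the corresponding Karandikar axiom among \eqref{qsax1}--\eqref{qsax7} together with the regularity axiom \eqref{qsax12}.

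For the object part I would expand both sides of each of \eqref{QS1a}, \eqref{QS2a}, \eqref{QS3a} using $X\preccurlyeq Y=X\prec Y+\tfrac{1}{2}[X,Y]$, $X\succcurlyeq Y=X\succ Y+\tfrac{1}{2}[X,Y]$, and the two equivalent descriptions of the associative product, $XY=X\prec Y+X\succ Y+[X,Y]=X\preccurlyeq Y+X\succcurlyeq Y$. After expansion the terms that occur are of four kinds: pure half-shuffle terms $(X\prec Y)\prec Z$, $(X\succ Y)\prec Z$, $X\succ(Y\succ Z)$, rewritten by \eqref{qsax1}, \eqref{qsax3}, \eqref{qsax2}; terms $[X\prec Y,Z]$, $[X\succ Y,Z]$, $[X,Y]\prec Z$, rewritten by \eqref{qsax6}, \eqref{qsax5}, \eqref{qsax7}; half-shuffles of a bracket such as $X\succ[Y,Z]$, which are retained; and doubly iterated brackets $[[X,Y],Z]$, $[X,[Y,Z]]$, which vanish by \eqref{qsax12} — the latter being also what allows one to expand the mixed brackets $[X,YZ]$ and $[XY,Z]$ appearing on the right-hand sides. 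Collecting the surviving terms and tracking the coefficients $\tfrac{1}{2}$, $\tfrac{1}{4}$ — which recombine correctly because the ground field has characteristic zero — then matches the two sides in each of the three cases; \eqref{QS1a} and \eqref{QS3a} are mirror images of one another, and \eqref{QS2a} is the one in which \eqref{qsax3} and \eqref{qsax5} both intervene. Associativity of the product $X\preccurlyeq Y+X\succcurlyeq Y$ is then automatic, being a formal consequence of \eqref{QS1a}--\eqref{QS3a}, so it needs no separate verification.

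For the morphism part I would observe that a morphism $f\colon A\to B$ of regular quasi-shuffle algebras is by definition a $k$-linear map commuting with $\prec$, $\succ$ and $[\ ,\ ]$; since $\preccurlyeq$ and $\succcurlyeq$ are fixed linear combinations of these three operations, $f$ automatically commutes with $\preccurlyeq$ and $\succcurlyeq$ and is therefore a morphism of shuffle algebras, and, as the assignment leaves the underlying $k$-linear map untouched, it preserves identities and composition on the nose. I do not anticipate a genuine obstacle here: the entire content sits in the object-level identities, which are a mechanical rerun of a computation already carried out, and the only thing demanding care is bookkeeping — checking that every bracket term produced by the expansions is of one of the three shapes controlled by \eqref{qsax5}--\eqref{qsax7} or is annihilated by \eqref{qsax12}, and staying consistent about which of the two expansions of $XY$ is fed into the right-hand sides of \eqref{QS1a} and \eqref{QS3a}.
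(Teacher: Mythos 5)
Your proposal is correct and follows essentially the same route as the paper, which proves this theorem simply by observing that the computation in the proof of Theorem~\ref{thm:shuffleStrato} uses only the axioms (\ref{qsax1})--(\ref{qsax7}) and (\ref{qsax12}) and therefore goes through verbatim in any regular quasi-shuffle algebra. Your additional verification of the morphism part (that a map commuting with $\prec$, $\succ$, $[\ ,\ ]$ automatically commutes with the linear combinations $\preccurlyeq$, $\succcurlyeq$) is routine but correctly supplies a detail the paper leaves implicit.
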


%%%%%%%%%%%%%%%%%%%%%%%%%%%%%%%%%%%%%%%%%%%%%%%
%%%%%%%%%%%%%%%%%%%%%%%%%%%%%%%%%%%%%%%%%%%%%%%
%%%%%%%%%%%%%%%%%%%%%%%%%%%%%%%%%%%%%%%%%%%%%%%

\section{Chronological calculus for stochastic integration}
\label{sec:chronostoch}

In this section, the second part of this work, we start by briefly reviewing the classical chronological calculus following Agrachev, Gamkrelidze and collaborators \cite{AG1978,AG1981,AGS1989,AS2004}. The aim is to show how chronological calculus can be applied in the context of stochastic calculus. The key idea is to use the notion of pre-Lie (or chronological) algebra instead of that of usual Lie algebra, to analyse group- and Lie-theoretical phenomena associated to evolution equations. We refer to \cite{Nacer2020} where this point of view is developed in more detail.

%%%%%%%%%%%%%%%%%%%%%%%%%%%%%%%%%%%%%%%%%%%%%%%
%%%%%%%%%%%%%%%%%%%%%%%%%%%%%%%%%%%%%%%%%%%%%%%

\subsection{Chronological calculus and pre-Lie algebra}
\label{ssec:chronostoch}

Time- or path-ordered products are ubiquitous, especially in theoretical physics and control theory, and form the basis for Agrachev and Gamkrelidze's chronological calculus \cite{AG1978}. These  authors understood that the combination of Lie algebra and integration by parts permits to define the useful notion of chronological algebra \cite{AG1981}, better known as pre-Lie or Vinberg algebra in algebra and geometry \cite{Burde2006,Cartier2011,Manchon2011}. Concepts from chronological calculus apply in the context of stochastic integration as far as iterated Stratonovich integrals for continuous semimartingales are concerned, because they obey the usual rules of calculus. 

However, for It\^o and Stratonovich integrals in the non-continuous case, the usual ideas of chronological calculus do not apply immediately, due to the terms arising from the jump component of the covariation bracket. It turns out that in this case, one must appeal to results originating in the study of non-commutative Rota--Baxter algebras. We refer to \cite{Nacer2020} for more details as well as to joint works \cite{CKP2018,EMPW2015b} for results in that direction related to stochastic exponentials in the context of It\^o calculus.

\smallskip

In a nutshell, chronological calculus is based on the idea of time-ordering of operators. Consider for example two time-dependent operators, $M(t)$ and $N(t)$ (with $M(0)=N(0)=0$), in a non-unital algebra $A$ of operators -- having suitable regularity properties allowing to compute derivatives, integrals, and so on. The classical integration by parts rule is satisfied
\begin{align*}
	M(t)N(t)
	&= \int_0^t ds \int_0^s du \dot{M}(s)\dot{N}(u) + \int_0^tds \int_0^s du \dot{M}(u)\dot{N}(s)\\
	&=:(M \succ N)(t)+(M \prec N)(t).
\end{align*}
We recognise in $\prec$ and $\succ$ the usual operations of left/right integration -- restricted to the context of deterministic processes. In particular, they satisfy the shuffle algebra axioms \eqref{shuffle}. Agrachev and Gamkrelidze observed that the binary operation 
$$
	(M \triangleright N)(t) 
	:= (M \succ N)(t)- (N \prec n)(t) 
	= \int_0^t ds \int_0^s du [\dot{M}(s),\dot{N}(u)]
$$
has particular properties defining a chronological algebra structure on $A$. The latter is known as pre-Lie or Vinberg algebra in the mathematical literature. 

Consider a vector space  $A$ with a binary product $\triangleright \colon A \otimes A \to A$ and the associated bracket product $[a,b]_\triangleright := a \triangleright b-b \triangleright a$. Write $L_x$ for the linear endomorphism of $A$ defined by left multiplication, $L_x(y):=x\triangleright y,$ and define the usual commutator bracket of linear endomorphisms of $A$, $[L_x,L_y]_\circ :=L_x\circ L_y-L_y\circ L_x$.

\begin{definition}[\cite{AG1981}] The pair $(A,\triangleright)$ is a pre-Lie algebra if and only if for any $x,y\in A$, the identity $[L_x,L_y]_\circ= L_{[x,y]_\triangleright}$ holds, which is equivalent to the (left) pre-Lie relation   
$$
	x \triangleright (y\triangleright z) - (x\triangleright y)\triangleright z
	=y\triangleright (x\triangleright z) - (y\triangleright x)\triangleright z.
$$
\end{definition}
The notion of pre-Lie algebra is finer than that of Lie algebra (it contains more information). Indeed, pre-Lie algebras are Lie admissible, that is, if $A$ is a pre-Lie algebra, then $(A,[ -, - ]_\triangleright)$ is a Lie algebra. 

The link with classical chronological calculus is as follows. Consider an algebra $\mathcal A$ of matrix-valued continuous semimartingales equipped with the left/right Fisk--Stratonovich integrals, $\succcurlyeq$ and $\preccurlyeq$, defined in \eqref{eq:FStratoLeft} respectively \eqref{eq:FStratoRight}. We write $\llbracket - , - \rrbracket$ for its commutator bracket: $$\llbracket X,Y\rrbracket _t
	:=X_tY_t-Y_tX_t.$$
According to our previous developments, computing in this algebra amounts to computing with time-dependent operators. The -- Fisk--Stratonovich -- integration by parts formula implies that 
\begin{equation}
\label{assocprod}
	\llbracket X,Y\rrbracket _t
	=\int_0^tX_s\circ dY_s+\int_0^t\circ dX_t Y_t
				-\int_0^tY_s\circ dX_t - \int_0^t\circ dY_sX_s,
\end{equation}
which can be written as the difference of: 
$$
	(X\triangleright Y)_t:=(X \succcurlyeq Y - Y\preccurlyeq X)_t
	=\int_0^tX_s\circ dY_s - \int_0^t\circ dY_sX_s
$$
and $(Y\triangleright X)_t$ so that $\llbracket X,Y\rrbracket =[X,Y]_\triangleright.$ That the algebra $\mathcal A$ is indeed a pre-Lie algebra, that is,
\begin{align}
\label{preLieid}
	([X,Y]_\triangleright\ \triangleright Z)_t
	&=(X\triangleright (Y\triangleright Z))_t - (Y\triangleright (X\triangleright Z))_t.
\end{align}
follows from the Jacobi identity of the commutator bracket on the non-commutative algebra $\mathcal A$. 

\begin{remark}
The same argument shows that, more generally, there is a forgetful functor from shuffle to pre-Lie algebras, that is, any shuffle algebra $(A,\prec,\succ)$ has the structure of a pre-Lie algebra with pre-Lie product: $x \triangleright y := x \succ y- y \prec x.$
\end{remark}

Let us apply these ideas in the context of stochastic exponentials. Recall a fundamental object in the classical analysis of differential equations, known as the Magnus formula \cite{Magnus1954} and its pre-Lie interpretation \cite{AG1981,EM2009}. It follows from studying the formal properties of the flow associated to a matrix differential equation using a Lie theoretic approach, for theoretical and numerical reasons. Consider for instance the evolution operator solution of the linear differential equation $\dot{X}(t)=X(t)H(t)$ with initial value $X(0)=\mathbf 1$, the identity matrix. Its logarithm is computed by a Lie series. Truncating the expansion of this logarithm, $\Omega(t):=\log(X(t))$, and taking its exponential is a classical and efficient way to approximate $X(t)$ numerically, while preserving group-theoretic properties \cite{Blanes2008,Iserles2000}.

The logarithm can be computed using the Baker--Campbell--Hausdorff formula (see, e.g., \cite{MielPleb1970}) or Magnus' non-linear differential equation \cite{Magnus1954}  
$$
	\dot{\Omega}(t)
	=\frac{ad_{\Omega}}{e^{ad_{\Omega}}-1}H(t)
	=H(t)+\sum\limits_{n>0}\frac{B_n}{n!}ad_{\Omega(t)}^n(H(t)),
$$
where $ad$ stands for the usual Lie adjoint representation, $ad_N(M):=NM-MN$, $ad^0_N(M)=M$, and the $B_n$ are the Bernoulli numbers. 

Let us explain how the formula adapts to Stratonovich integrals using recently developed algebraic tools that are most likely not familiar in the context of stochastic integration.  The following developments are based on \cite{Nacer2020}. We omit here the group-theoretical perspective that relies on two underlying Hopf algebra structures, existing on the enveloping algebra of any pre-Lie algebra \cite{ChaPa2013}. 

Let $\mathcal A$ be our usual algebra of continuous matrix-valued semimartingales, now equipped with the pre-Lie product $\triangleright$. The algebra of polynomials over $\mathcal A$ is denoted $\mathbb{R}[\mathcal A]$ and we identify $m$-multilinear maps symmetric in the $m$ entries with maps from the degree $m$ component of this polynomial algebra. To avoid confusion between the product of two matrix-valued semimartingales in $\mathcal A$ and their (commutative) product in $\mathbb{R}[\mathcal A]$, we denote the latter $X\odot Y$. The brace map on $\mathcal A$ is the family of symmetric multilinear maps into $\mathcal A$ 
$$
 	\mathbb{R}[\mathcal A]\otimes \mathcal A\longrightarrow  \mathcal A, 
	\quad
	P\otimes X\longmapsto  \{P\}X,
$$
defined inductively by
$$
	\{Y\}X:=Y \triangleright X, 
$$
for $Y,X \in  \mathcal A$, and for $Y_1,Y_2,\ldots,Y_n,X \in \mathcal A$ we have 
$$
	\{Y_1,\dots,Y_n\}X:=\{Y_n\}(\{Y_1,\dots,Y_{n-1}\}X)-\sum\limits_{i=1}^{n-1}\{Y_1,\dots,\{Y_n\}Y_i,\dots,Y_{n-1}\}X.
$$
Observe that for $n=2$ the last equality encodes the pre-Lie identity \eqref{preLieid} as $\{Y_1,Y_2\}X=\{Y_2,Y_1\}X$. Following Guin and Oudom \cite{go2008}, we introduce a product $\ast$ on $\mathbb{R}[\mathcal A]$ in terms of the brace map. For elements $X_1,\dots,X_n$ and $Y_1,\dots,Y_m$ in $\mathcal A$,
\begin{equation}
\label{astproduct}
	(Y_1\odot \cdots \odot Y_m) \ast (X_1\odot \cdots \odot X_n)
	:=\sum\limits_{f} W_0\odot \{W_1\}X_1\odot \cdots \odot \{W_n\}X_n,
\end{equation}
where the sum is over all maps $f$ from $\{1,\ldots,m\}$ to $\{0,\ldots,n\}$ and the $W_i:=\prod_{j\in f^{-1}(i)} Y_j$. For example, $Y\ast X=YX+\{Y\}X$, for $X,Y \in \mathcal A$. 

Recall now that the enveloping algebra, $U (L)$, of a Lie algebra $L$ is an associative algebra (uniquely defined
up to isomorphism) such that \cite{Reutenauer1993}:

\begin{itemize}
\item the Lie algebra $L$ embeds in $U (L)$ (as a Lie algebra, where the Lie algebra structure on $U (L)$ is
induced by the associative product, that is, in terms of the ususal commutator bracket,
\item for any associative algebra $A$ (which is a Lie algebra, $L_A$, when equipped with
the commutator bracket), there is a natural bijection between Lie algebra maps from $L$
to $A$ and associative algebra maps from $U (L)$ to $A$.
\end{itemize}

The central result of the work of Oudom and Guin \cite{go2008} is the next theorem.

\begin{theorem}[\cite{go2008}]\label{thm:GO}
$\mathbb{R}[\mathcal A]$ with the product $\ast$ defined in \eqref{astproduct} is a non-commutative, associative and unital algebra. The product makes $\mathbb{R}[\mathcal A]$ the enveloping algebra of the Lie algebra $L_{\mathcal A}$ associated to $\mathcal A$.
\end{theorem}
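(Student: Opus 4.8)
\emph{Proof strategy.} The plan is to realise $\mathbb{R}[\mathcal A]$ as the symmetric algebra $S(\mathcal A)$ over $\mathcal A$ and to equip it once and for all with its canonical cocommutative coalgebra structure: the coproduct $\Delta$ is the unique morphism for the commutative product $\odot$ with $\Delta X = X\otimes\mathbf 1 + \mathbf 1\otimes X$ for $X\in\mathcal A$, and $\varepsilon$ the projection onto the degree-zero part, the grading being the polynomial degree already in place. I then prove that $\ast$ is an associative unital product making $(\mathbb{R}[\mathcal A],\ast,\Delta)$ a connected graded cocommutative Hopf algebra. Two of the assertions come for free. Unitality is read off \eqref{astproduct}: the only maps $f$ contributing to $\mathbf 1\ast P$ and $P\ast\mathbf 1$ are constant maps, and with the normalisation $\{\mathbf 1\}X:=X$ forced by \eqref{astproduct} one gets $\mathbf 1\ast P = P = P\ast\mathbf 1$. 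Non-commutativity is the computation recorded just before the statement: $Y\ast X - X\ast Y = \{Y\}X - \{X\}Y = Y\triangleright X - X\triangleright Y = [Y,X]_\triangleright = \llbracket Y,X\rrbracket$, which is generically nonzero; the same identity shows that the inclusion $\iota\colon\mathcal A\hookrightarrow\mathbb{R}[\mathcal A]$ is a morphism of Lie algebras from $L_{\mathcal A}$ into $(\mathbb{R}[\mathcal A],\ast)$ with its commutator bracket.

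The substantial point is associativity of $\ast$. Following Oudom and Guin, I would first extend the brace map to a bilinear operation $\{-\}-\colon\mathbb{R}[\mathcal A]\otimes\mathbb{R}[\mathcal A]\to\mathbb{R}[\mathcal A]$, normalised by $\{\mathbf 1\}Q:=Q$ and $\{P\}\mathbf 1:=\varepsilon(P)\mathbf 1$, and defined recursively on the $\odot$-degree of the second argument by a Leibniz-type rule twisted by $\Delta$ in the first argument, so that $\{P\}(X\odot Q)$ with $X\in\mathcal A$ is expressed through the elements $\{P_{(1)}\}X\in\mathcal A$ and the lower-degree terms $\{-\}Q$. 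The product of \eqref{astproduct} can then be rewritten entirely in terms of $\odot$, $\Delta$ and this extended brace, and associativity $(P\ast Q)\ast R = P\ast(Q\ast R)$ is proved by induction on total degree, being reduced via coassociativity of $\Delta$ and the recursion to an identity among nested braces whose base case is exactly the pre-Lie relation \eqref{preLieid} --- equivalently the symmetry $\{Y_1,Y_2\}X = \{Y_2,Y_1\}X$ noted right after the definition of the brace. The same induction shows that $\Delta$ is an algebra morphism $(\mathbb{R}[\mathcal A],\ast)\to(\mathbb{R}[\mathcal A]\otimes\mathbb{R}[\mathcal A],\ast\otimes\ast)$, so $(\mathbb{R}[\mathcal A],\ast,\Delta)$ is a bialgebra, and connectedness of the polynomial grading promotes it to a Hopf algebra. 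I expect this bookkeeping --- juggling the maps $f\colon\{1,\dots,m\}\to\{0,\dots,n\}$ of \eqref{astproduct} against the nested recursion defining $\{Y_1,\dots,Y_n\}X$ --- to be the main obstacle; it is only the pre-Lie identity that makes everything close up.

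It remains to identify $(\mathbb{R}[\mathcal A],\ast,\Delta)$ with the enveloping algebra $U(L_{\mathcal A})$. Since the ground field has characteristic zero (our standing convention), the Cartier--Quillen--Kostant--Milnor--Moore theorem applies: a connected graded cocommutative Hopf algebra is the enveloping algebra of its Lie algebra of primitive elements. I would then check that the primitives of the coalgebra $(\mathbb{R}[\mathcal A],\Delta)$ are exactly the degree-one part $\mathcal A$ --- for the standard symmetric coproduct no element of polynomial degree $\ge 2$ is primitive, while every $X\in\mathcal A$ is --- and that the Lie bracket they inherit from $\ast$ is $[-,-]_\triangleright$, by the non-commutativity computation above. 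Hence $\iota$ exhibits $\mathbb{R}[\mathcal A]$ as $U(L_{\mathcal A})$. Alternatively, one may verify the universal property directly: a Poincar\'e--Birkhoff--Witt-type argument extends any Lie-algebra morphism $L_{\mathcal A}\to B$, with $B$ associative, uniquely to an $\ast$-algebra morphism $\mathbb{R}[\mathcal A]\to B$, which is the defining property of the enveloping algebra.
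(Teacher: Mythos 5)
The paper does not actually prove Theorem \ref{thm:GO}: it is quoted from Oudom and Guin \cite{go2008}, so there is no internal argument to measure your proposal against. Taken on its own terms, your strategy is the standard one --- essentially the Oudom--Guin construction recast in Hopf-algebraic language --- and the easy parts are carried out correctly: unitality from the constant maps $f$ in \eqref{astproduct} together with $\{\mathbf 1\}X=X$, and non-commutativity plus the identification of the commutator of $\ast$ with $[\,\cdot\,,\cdot\,]_\triangleright$ from $Y\ast X=Y\odot X+Y\triangleright X$. Two caveats. First, the central point, associativity of $\ast$, is only announced: you describe the extension of the brace to $\mathbb{R}[\mathcal A]\otimes\mathbb{R}[\mathcal A]$ and the inductive scheme, and you correctly locate the pre-Lie identity \eqref{preLieid} (equivalently the symmetry $\{Y_1,Y_2\}X=\{Y_2,Y_1\}X$) as the point where the induction closes, but the bookkeeping you yourself flag as the main obstacle is not done; as it stands this is a proof plan rather than a proof. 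Second, a small but genuine imprecision: $(\mathbb{R}[\mathcal A],\ast,\Delta)$ is not a \emph{graded} Hopf algebra for the polynomial degree, since the brace terms strictly decrease degree (already $Y\ast X=Y\odot X+Y\triangleright X$ mixes degrees $2$ and $1$); it is only filtered, with associated graded $S(\mathcal A)$. The Cartier--Milnor--Moore step still goes through, because what that theorem needs is cocommutativity together with conilpotence of the coalgebra --- and the coproduct is the untouched symmetric one, whose primitives in characteristic zero are exactly $\mathcal A$ --- but you should invoke the conilpotent rather than the graded-connected form of the theorem. Finally, beware a notational clash: in the paper $\iota$ denotes the map $(\mathbb{R}[\mathcal A],\ast)\to\mathcal A$ furnished by the universal property, not the inclusion $\mathcal A\hookrightarrow\mathbb{R}[\mathcal A]$.
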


Applying Theorem \ref{thm:GO} to $\mathcal A$ we see that the commutator bracket in $\mathcal A$ identifies with the pre-Lie bracket: $\llbracket X,Y\rrbracket =[\ ,\ ]_\triangleright$. On the other hand, by the universal properties of enveloping algebras, there is a unique associative algebra map $\iota$ from $(\mathbb{R}[A], \ast)$ to $\mathcal A$ which is the identity on $\mathcal A$.
In degree two we have: 
\begin{align*}
	\iota(Y\odot X)
	=\iota(Y\ast X)-\iota(\{Y\}X)
	&=YX - Y \triangleright X\\
	&=Y \preccurlyeq X+Y \succcurlyeq X - (Y \succcurlyeq X - X \preccurlyeq Y)\\
	&=Y \preccurlyeq X+X \preccurlyeq Y
	=: \mathcal{T}\langle Y,X \rangle,
\end{align*} 
where, using now the language of theoretical physics, we call time-ordered product of two elements in $\mathcal A$ the product $\mathcal{T}\langle Y,X \rangle:=X \preccurlyeq Y+Y \preccurlyeq X$. 
In general, for $X_1,\ldots,X_n\in \mathcal A $,
$$
	\mathcal{T}\langle X_1,X_2,\dots ,X_n\rangle:= \sum\limits_{\sigma\in S_n}
	X_{\sigma(1)} \preccurlyeq (X_{\sigma(2)} \preccurlyeq (\cdots  
	\preccurlyeq (X_{\sigma(n-1)} \preccurlyeq X_{\sigma(n)})\cdots )),
$$
where $S_n$ denotes the symmetric group of order $n$. The degree two calculation is a particular instance of a general phenomenon. The following Theorem relating pre-Lie products and time-ordered exponentials was obtained in \cite[p. 1291]{EFP2014}: 

\begin{theorem} \label{thm:keyth}
The image in $\mathcal A$ of a monomial $X_1\odot \cdots \odot X_n \in \mathbb{R}[\mathcal A]$ by the canonical map $\iota$ is the time-ordered product of the $X_i$s in $\mathcal A$:
 \begin{equation}
 \label{keyeq}
 	\iota (X_1\odot\cdots\odot X_n) = \mathcal{T}\langle X_1,\dots,X_n\rangle.
 \end{equation}
\end{theorem}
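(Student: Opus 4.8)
The plan is to prove the identity by induction on $n$, the cases $n=1$ (where $\iota$ restricts to the identity on $\mathcal A$ and $\mathcal{T}\langle X_1\rangle=X_1$) and $n=2$ (verified just above) serving as base. Two ingredients drive the step: that $\iota$ is an algebra morphism for $\ast$, and a recursion for the $\ast$-product read off from \eqref{astproduct}. Grouping the summands defining $X_1\ast(X_2\odot\cdots\odot X_n)$ according to whether the map $f\colon\{1\}\to\{0,\dots,n-1\}$ sends $1$ to $0$ or to some $k\geq1$ (and noting $\{X_1\}X=X_1\triangleright X$) gives
\[
	X_1\ast(X_2\odot\cdots\odot X_n)=X_1\odot X_2\odot\cdots\odot X_n+\sum_{i=2}^n X_2\odot\cdots\odot(X_1\triangleright X_i)\odot\cdots\odot X_n .
\]
Solving for the monomial, applying $\iota$, and invoking the inductive hypothesis in degree $n-1$ yields
\[
	\iota(X_1\odot\cdots\odot X_n)=X_1\cdot\mathcal{T}\langle X_2,\dots,X_n\rangle-\sum_{i=2}^n\mathcal{T}\langle X_2,\dots,X_1\triangleright X_i,\dots,X_n\rangle ,
\]
where $X_1\cdot(-)$ denotes the associative product in $\mathcal A$.

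Thus the theorem reduces to an identity internal to the shuffle algebra, independent of $\iota$: in any non-commutative shuffle algebra $(A,\preccurlyeq,\succcurlyeq)$ with pre-Lie product $X\triangleright Y:=X\succcurlyeq Y-Y\preccurlyeq X$ and time-ordered products $\mathcal{T}$, for all $X,Y_1,\dots,Y_m\in A$,
\[
	X\cdot\mathcal{T}\langle Y_1,\dots,Y_m\rangle=\mathcal{T}\langle X,Y_1,\dots,Y_m\rangle+\sum_{j=1}^m\mathcal{T}\langle Y_1,\dots,X\triangleright Y_j,\dots,Y_m\rangle .
\]
Taking $m=n-1$, $X=X_1$, $Y_i=X_{i+1}$ and combining with the previous display immediately gives $\iota(X_1\odot\cdots\odot X_n)=\mathcal{T}\langle X_1,\dots,X_n\rangle$, closing the induction. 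I would prove this Leibniz-type identity by induction on $m$, using the first-factor recursion $\mathcal{T}\langle Y_1,\dots,Y_m\rangle=\sum_i Y_i\preccurlyeq\mathcal{T}\langle Y_1,\dots,\widehat{Y_i},\dots,Y_m\rangle$ (obtained by grouping permutations by their first value). Peeling the outermost $\preccurlyeq$ off $\mathcal{T}\langle X,Y_1,\dots,Y_m\rangle$ and writing $X\cdot Z=X\succcurlyeq Z+X\preccurlyeq Z$ reduces the claim to
\[
	X\succcurlyeq\mathcal{T}\langle Y_1,\dots,Y_m\rangle=\sum_j Y_j\preccurlyeq\mathcal{T}\langle X,Y_1,\dots,\widehat{Y_j},\dots,Y_m\rangle+\sum_j\mathcal{T}\langle Y_1,\dots,X\triangleright Y_j,\dots,Y_m\rangle .
\]
Expanding the left side with the same recursion, moving $X$ inward via \eqref{QS2a} ($X\succcurlyeq(Y_i\preccurlyeq Z)=(X\succcurlyeq Y_i)\preccurlyeq Z$), substituting $X\succcurlyeq Y_i=X\triangleright Y_i+Y_i\preccurlyeq X$, and applying \eqref{QS1a} ($(Y_i\preccurlyeq X)\preccurlyeq Z=Y_i\preccurlyeq(X\cdot Z)$) followed by the inductive hypothesis for $m-1$ leaves precisely a combinatorial identity among the ``diagonal'' terms $(X\triangleright Y_i)\preccurlyeq\mathcal{T}\langle\dots\rangle$ and the ``off-diagonal'' terms $Y_i\preccurlyeq\mathcal{T}\langle\dots\rangle$.

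The main obstacle is exactly this last bookkeeping. One must recognize that expanding the target $\sum_j\mathcal{T}\langle Y_1,\dots,X\triangleright Y_j,\dots,Y_m\rangle$ by the first-factor recursion splits it into a diagonal part $\sum_j(X\triangleright Y_j)\preccurlyeq\mathcal{T}\langle Y_1,\dots,\widehat{Y_j},\dots,Y_m\rangle$ and an off-diagonal part, and that these match, respectively and after relabelling indices, the term produced by $X\triangleright Y_i$ in $X\succcurlyeq Y_i=X\triangleright Y_i+Y_i\preccurlyeq X$ and the residual double sum coming from the inductive hypothesis. Apart from this identity the argument only uses the three shuffle axioms \eqref{QS1a}--\eqref{QS3a}, the definition of $\triangleright$, and bilinearity. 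This is essentially the route of \cite{EFP2014}; it simultaneously shows that $X_1\odot\cdots\odot X_n\mapsto\mathcal{T}\langle X_1,\dots,X_n\rangle$ is an associative algebra morphism $(\mathbb{R}[\mathcal A],\ast)\to\mathcal A$ restricting to the identity on $\mathcal A$, hence equals $\iota$ by the universal property underlying Theorem \ref{thm:GO}.
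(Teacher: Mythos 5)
Your argument is correct. Note, however, that the paper itself offers no proof of Theorem~\ref{thm:keyth}: it states the result with a citation to \cite{EFP2014} and only records the degree-two computation $\iota(Y\odot X)=\mathcal{T}\langle Y,X\rangle$ beforehand, so there is no in-text proof to compare yours against. What you supply is a complete self-contained derivation, and its skeleton is sound: the recursion $X_1\ast(X_2\odot\cdots\odot X_n)=X_1\odot\cdots\odot X_n+\sum_{i\ge 2}X_2\odot\cdots\odot(X_1\triangleright X_i)\odot\cdots\odot X_n$ is exactly what \eqref{astproduct} gives for a degree-one left factor, and applying the morphism $\iota$ correctly reduces the theorem to the Leibniz-type identity
\begin{equation*}
	X\cdot\mathcal{T}\langle Y_1,\dots,Y_m\rangle=\mathcal{T}\langle X,Y_1,\dots,Y_m\rangle+\sum_{j=1}^m\mathcal{T}\langle Y_1,\dots,X\triangleright Y_j,\dots,Y_m\rangle,
\end{equation*}
which is the genuine content of the statement. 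Your inductive verification of that identity — first-factor recursion, \eqref{QS2a} to push $X\succcurlyeq$ onto $Y_i$, the substitution $X\succcurlyeq Y_i=X\triangleright Y_i+Y_i\preccurlyeq X$, then \eqref{QS1a} and the inductive hypothesis — closes correctly: the diagonal terms $(X\triangleright Y_i)\preccurlyeq\mathcal{T}\langle\cdots\rangle$ and the off-diagonal double sum are precisely the first-factor expansion of $\sum_j\mathcal{T}\langle Y_1,\dots,X\triangleright Y_j,\dots,Y_m\rangle$. One small caveat: your closing claim that the argument \emph{simultaneously} shows $X_1\odot\cdots\odot X_n\mapsto\mathcal{T}\langle X_1,\dots,X_n\rangle$ is an algebra morphism is slightly more than the induction literally establishes (it computes $\iota$ on monomials rather than checking multiplicativity on arbitrary $\ast$-products), but this remark is not needed for the proof and does not affect its validity.
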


Notice that, in particular, 
$$
	\frac{1}{n!}\iota (X^{\odot n})
	=\frac{1}{n!}\mathcal{T}\langle X,\dots,X\rangle
	= X \preccurlyeq (X \preccurlyeq (\cdots  \preccurlyeq (X \preccurlyeq X)\cdots )).
$$
	
Let us apply these ideas to the study of the stochastic exponential and its logarithm in the Stratonovich framework. We address these problems at a purely formal level. Regarding the existence of the stochastic exponential and the convergence issues of the related series we refer to \cite{Protter2005} for the It\^o case and to Ben Arous \cite{Benarous1989} and Castell \cite{Castell1993} for the Stratonovich one.

\begin{definition}
For a continuous matrix-valued semimartingale, $X \in \mathcal A$, the (Stratonovich) right stochastic exponential is defined through $$\mathcal{E}_{\preccurlyeq}(X) = \mathbf{1} + \big(X \preccurlyeq \mathcal{E}_{\preccurlyeq}(X)\big),$$
or, by a Picard iteration, as a series
$$
	\mathcal{E}_{\preccurlyeq}(X)
	=\mathbf{1}+X+X \preccurlyeq X+\dots 
	+X \preccurlyeq (X \preccurlyeq (\cdots  \preccurlyeq (X \preccurlyeq X)\cdots ))+\cdots
$$
\end{definition}

We are interested in the stochastic analogue of the classical Baker--Campbell--Hausdorff problem of computing the logarithm $\Omega(X)$ of the solution
\begin{equation}
\label{eq:preLieMag2}
 	\Omega(X)=\log\!\big(\mathcal{E}_{\preccurlyeq}(X)\big).
\end{equation}
Since Stratonovich calculus obeys the usual integration by parts rule, the well-known Strichartz formula holds \cite{Benarous1989,MielPleb1970,Strichartz1987}. We are interested here in the stochastic analog of the so-called Magnus solution.

By Theorem \ref{thm:keyth}, the equation 
$$\mathcal{E}_{\preccurlyeq}(X)=\exp\!\big(\Omega(X)\big)$$
lifts in $\mathbb{R}[\mathcal A]$ to an equality of exponentials:
\begin{equation}
\label{eq:preLieMag4}
 	\exp^{\odot} (X)=\exp^{\ast}( \tilde\Omega(X)),
\end{equation}
where $\exp^{\odot}(X)$ (resp.~$\exp^{\ast}(\tilde\Omega(X))$) denotes the exponential of $X \in \mathcal A$ (resp.~$\tilde\Omega(X)$) for the $\odot$ (resp.~$\ast$) product. Theorem \ref{thm:keyth} together with the general properties of enveloping algebras insure that this identity maps to (\ref{eq:preLieMag2}) by $\iota$ and that $\iota(\tilde\Omega(X))=\Omega(X)$. The next proposition was shown in \cite{ChaPa2013}, using the fact that the maps $\exp^\odot$ and $\exp^\ast$ have a Lie theoretic interpretation.
 
\begin{proposition}\label{ChaPa2013}
The element $\tilde\Omega(X)=\log^\ast\circ \exp^\odot(X)$ belongs to $\mathcal A$ and satifies the fixed point equation:
\begin{equation}
\label{MagLietheo}
	\tilde\Omega(X)=\big\lbrace\frac{\tilde\Omega(X)}{\exp^\ast(\tilde\Omega(X))-{\mathbf 1}}\big\rbrace X,
\end{equation}
where $\tilde\Omega(X)/(\exp^\ast(\tilde\Omega(X))-{\mathbf 1})$ is computed in $\mathbb{R}[\mathcal A]$ using the $\ast$ product.
\end{proposition}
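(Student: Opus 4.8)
The plan is to read \eqref{MagLietheo} off from the Lie-theoretic interpretation of the two exponentials $\exp^\odot$ and $\exp^\ast$, following \cite{ChaPa2013}. First I would place the discussion in a complete filtered setting -- introduce a formal parameter $t$, replace $X$ by $tX$, and work over $\mathbb{R}[[t]]$ with the $t$-adic filtration -- so that $\mathcal A$ becomes a complete filtered pre-Lie algebra and all the series below converge; the asserted identity then follows by comparing $t$-coefficients and setting $t=1$, and I suppress $t$ in what follows. In this completion $\exp^\odot(X)=\mathbf 1+X+\cdots$ lies in $\mathbf 1$ plus the completed augmentation ideal of $\mathbb{R}[\mathcal A]$, so $\log^\ast(\exp^\odot(X))$ and the Bernoulli series $\tilde\Omega(X)/(\exp^\ast(\tilde\Omega(X))-\mathbf 1)=\sum_{n\geq 0}\frac{B_n}{n!}\tilde\Omega(X)^{\ast n}$ are well defined.

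Next I would show $\tilde\Omega(X)\in\mathcal A$. By Theorem \ref{thm:GO}, $(\mathbb{R}[\mathcal A],\ast)$ is the enveloping algebra of $L_{\mathcal A}$, realised by the Guin--Oudom construction on the symmetric coalgebra $\mathbb{R}[\mathcal A]=S(\mathcal A)$, i.e. with the standard coproduct $\Delta$ for which the elements of $\mathcal A$ are primitive. Hence the primitives of $(\mathbb{R}[\mathcal A],\ast,\Delta)$ are precisely $\mathcal A$, and the group-like elements of the completion are precisely the $\exp^\ast(\omega)$ with $\omega\in\mathcal A$. Since $\exp^\odot(X)=\sum_n\frac{1}{n!}X^{\odot n}$ is group-like for $\Delta$ -- group-likeness only involves the coproduct, and $X$ is primitive -- it equals $\exp^\ast(\omega)$ for a unique $\omega\in\mathcal A$; by definition $\omega=\log^\ast(\exp^\odot(X))=\tilde\Omega(X)$, so $\tilde\Omega(X)\in\mathcal A$ and \eqref{eq:preLieMag4} holds.

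Then I would record two brace identities. (i) The assignment $P\mapsto\{P\}(-)$ is an algebra morphism $(\mathbb{R}[\mathcal A],\ast)\to(\mathrm{End}(\mathcal A),\circ)$ -- this associativity of the brace action is exactly what the Guin--Oudom product encodes \cite{go2008} -- so, by induction on $n$ from $\{\omega\}X=\omega\triangleright X=L_\omega(X)$, one gets $\{\omega^{\ast n}\}X=L_\omega^n(X)$ for $\omega,X\in\mathcal A$. Therefore the right-hand side of \eqref{MagLietheo} equals $\sum_{n\geq 0}\frac{B_n}{n!}\{\tilde\Omega(X)^{\ast n}\}X=\sum_{n\geq 0}\frac{B_n}{n!}L_{\tilde\Omega(X)}^n(X)=\frac{L_{\tilde\Omega(X)}}{e^{L_{\tilde\Omega(X)}}-1}(X)$. (ii) For $\omega\in\mathcal A$ one has $\pi_1(\omega^{\ast(n+1)})=\{\omega^{\ast n}\}\omega=L_\omega^n(\omega)$, where $\pi_1\colon\mathbb{R}[\mathcal A]\to\mathcal A$ is the projection onto the $\odot$-degree-one part: in the Guin--Oudom expansion $\omega^{\ast n}\ast\omega=\sum_{(\omega^{\ast n})}(\omega^{\ast n})_{(1)}\odot\{(\omega^{\ast n})_{(2)}\}\omega$ the factor $\{(\omega^{\ast n})_{(2)}\}\omega$ always has $\odot$-degree one, so isolating the $\odot$-degree-one component forces $(\omega^{\ast n})_{(1)}=\mathbf 1$ and leaves $\{\omega^{\ast n}\}\omega$.

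Finally I would apply $\pi_1$ to both sides of $\exp^\ast(\tilde\Omega(X))=\exp^\odot(X)$. On the right only the term linear in $X$ survives, giving $X$; on the left, using (ii),
$$\pi_1\big(\exp^\ast(\tilde\Omega(X))\big)=\sum_{n\geq 1}\frac{1}{n!}\pi_1\big(\tilde\Omega(X)^{\ast n}\big)=\sum_{m\geq 0}\frac{1}{(m+1)!}L_{\tilde\Omega(X)}^m\big(\tilde\Omega(X)\big)=\frac{e^{L_{\tilde\Omega(X)}}-1}{L_{\tilde\Omega(X)}}\big(\tilde\Omega(X)\big).$$
Hence $X=\frac{e^{L_{\tilde\Omega(X)}}-1}{L_{\tilde\Omega(X)}}(\tilde\Omega(X))$; since $\frac{e^{L}-1}{L}$ has leading term the identity it is invertible, and applying its inverse and comparing with (i) gives $\tilde\Omega(X)=\frac{L_{\tilde\Omega(X)}}{e^{L_{\tilde\Omega(X)}}-1}(X)=\big\lbrace\tilde\Omega(X)/(\exp^\ast(\tilde\Omega(X))-\mathbf 1)\big\rbrace X$, which is \eqref{MagLietheo}. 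I expect the real work to sit in the two brace identities -- especially (ii), controlling how $\ast$-powers of a primitive interact with the standard coproduct, which is where the Guin--Oudom structure is genuinely used; the membership statement and the final extraction are then bookkeeping with the filtration.
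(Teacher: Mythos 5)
Your argument is correct and follows essentially the route the paper itself indicates: the paper does not prove this proposition but defers to \cite{ChaPa2013}, noting only that the proof uses the Lie-theoretic interpretation of $\exp^\odot$ and $\exp^\ast$, which is precisely what you carry out (group-like elements of the completed Hopf algebra are the $\ast$-exponentials of primitives, the primitives of the Guin--Oudom enveloping algebra are exactly $\mathcal A$, and the fixed-point equation drops out by projecting $\exp^{\ast}(\tilde\Omega(X))=\exp^{\odot}(X)$ onto the $\odot$-degree-one component and inverting $(e^{L}-1)/L$). Your two brace identities and the degree-one extraction reproduce the Chapoton--Patras computation faithfully, so nothing needs correcting.
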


Note that the brace map is in place on the righthand side in \eqref{MagLietheo}. We set
$$
	\ell^{(n)}_{X\triangleright}(Y) :=X \triangleright (\ell^{(n-1)}_{X\triangleright}(Y)),\ \ \ell^{(0)}_{X\triangleright}(Y):=Y.
$$ 
The $B_n/n!$ are the coefficients of the formal power series expansion of $x/(\exp(x)-1)$ and, by formal properties of the enveloping algebra, we have $\iota(\lbrace X^{\ast n}\rbrace Y)=\ell^{(n)}_{X\triangleright}(Y)$. We refer, e.g., to \cite{Nacer2020} for an explanation of this general phenomenon in the context of enveloping algebras of pre-Lie algebras. We recover finally the pre-Lie Magnus expansion of the logarithm of the right Stratonovich exponential obtained in \cite{CKP2018}.

\begin{theorem} The continuous matrix-valued semimartingale
$\Omega(X)$ satisfies the fixed point equation
\begin{equation}
\label{eq:preLieMag3}
	\Omega(X) = \sum_{n \ge 0} \frac{B_n}{n!} \ell^{(n)}_{\Omega(X)\triangleright}(X).
\end{equation}
\end{theorem}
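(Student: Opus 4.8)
The plan is to obtain \eqref{eq:preLieMag3} by transporting the fixed point equation \eqref{MagLietheo} of Proposition \ref{ChaPa2013}, which lives in the Oudom--Guin enveloping algebra $(\mathbb{R}[\mathcal A],\ast)$, down to $\mathcal A$ along the canonical associative algebra morphism $\iota$. First I would read off the right-hand side of \eqref{MagLietheo} as a $\ast$-series: since $x/(\exp(x)-1)=\sum_{n\ge 0}\frac{B_n}{n!}x^n$ and $\tilde\Omega(X)$ has no constant term, the element $\tilde\Omega(X)/(\exp^\ast(\tilde\Omega(X))-\mathbf 1)$ is by definition $\sum_{n\ge 0}\frac{B_n}{n!}\,\tilde\Omega(X)^{\ast n}$, a sum that is finite in each component once one grades $\mathbb{R}[\mathcal A]$ by the number of copies of $X$ (which is the only level of rigour needed here, as we work purely formally). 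Thus \eqref{MagLietheo} reads $\tilde\Omega(X)=\sum_{n\ge 0}\frac{B_n}{n!}\{\tilde\Omega(X)^{\ast n}\}X$.

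Next I would apply $\iota$ to both sides. On the left, $\iota(\tilde\Omega(X))=\Omega(X)$ by the identity recorded just before the statement, which follows from Theorem \ref{thm:keyth} and the universal property of $U(L_{\mathcal A})$. On the right, the brace map is $\mathbb{R}$-linear in its polynomial argument, so $\iota$ distributes over the (component-wise finite) sum; and by the formal property of enveloping algebras of pre-Lie algebras recalled above, namely $\iota(\{X^{\ast n}\}Y)=\ell^{(n)}_{X\triangleright}(Y)$, applied with $X$ replaced by $\tilde\Omega(X)$ and $Y$ by $X$ and using once more $\iota(\tilde\Omega(X))=\Omega(X)$, the $n$-th term maps to $\frac{B_n}{n!}\,\ell^{(n)}_{\Omega(X)\triangleright}(X)$. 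Summing over $n$ yields exactly \eqref{eq:preLieMag3}.

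The only genuinely substantive ingredient is the identity $\iota(\{X^{\ast n}\}Y)=\ell^{(n)}_{X\triangleright}(Y)$, i.e. that the canonical map from $\mathbb{R}[\mathcal A]$ to $\mathcal A$ intertwines the brace of an $n$-fold $\ast$-power with the $n$-fold iterated left pre-Lie multiplication; this is where the whole pre-Lie-to-Lie dictionary is used, and it is the hard part conceptually. However, it is precisely the fact invoked in the text (with reference to \cite{Nacer2020}), so it may be assumed, and with it in hand the remaining work is the bookkeeping above: matching the Bernoulli generating series to the $\ast$-product, checking $\mathbb{R}$-linearity of the brace in its polynomial slot so that $\iota$ commutes with the series, and invoking $\iota(\tilde\Omega(X))=\Omega(X)$.
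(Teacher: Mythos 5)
Your proposal is correct and follows essentially the same route as the paper: the theorem is obtained by expanding $x/(\exp(x)-1)=\sum_{n\ge 0}\frac{B_n}{n!}x^n$ in \eqref{MagLietheo} and applying $\iota$, using $\iota(\tilde\Omega(X))=\Omega(X)$ together with the key identity $\iota(\{X^{\ast n}\}Y)=\ell^{(n)}_{X\triangleright}(Y)$, which the paper likewise takes as given (citing \cite{Nacer2020}). Your bookkeeping of the remaining steps matches the paper's intended argument.
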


\begin{remark}
The left stochastic exponential is defined similarly through $\mathcal{E}_{\succcurlyeq}(X)_t = \mathbf{1} + \big(\mathcal{E}_{\succcurlyeq}(X) \succcurlyeq X\big)_t.$
It satisfies 
\begin{equation}
\label{eq:preLieMag1}
 	\mathcal{E}_{\succcurlyeq}(X)=\exp\!\big(\!-\Omega(-X)\big).
\end{equation}
\end{remark}

%%%%%%%%%%%%%%%%%%%%%%%%%%%%%%%%%%%%%%%%%%%%%%%
%%%%%%%%%%%%%%%%%%%%%%%%%%%%%%%%%%%%%%%%%%%%%%%

\subsection{Chronological It\^o calculus}
\label{ssec:prelieIto}

In the present subsection we will apply the machinery developed in the previous subsection in the context of It\^o calculus. As remarked earlier, our arguments are purely algebraic. We do neither address the question of existence of stochastic exponentials nor do we discuss convergence issues of the associated series. On these questions, the reader is referred to the standard reference \cite{Protter2005}.

The existence of a continuous Baker--Campbell--Hausdorff, or Strichartz formula, the presence of pre-Lie structures as well as a Magnus formula could be expected in Stratonovich calculus due to the fact that the latter satisfies the usual rules of calculus. In It\^o calculus, however, things are not so simple due to the presence of the covariation bracket and the fact that the usual shuffle algebra structure must be replaced by Karandikar's axioms, i.e., a quasi-shuffle algebra. For iterated It\^o integrals of matrix-valued semimartingales, a Strichartz-type formula was obtained in \cite{EMPW2015a,EMPW2015b}. The difference with the classical formula reflect the fact that one must take into account the covariation bracket. This is achieved by replacing bijections, that is, permutations and their descent statistics as they appear in the classical formula, by surjections and a suitable notion of descents in this new context. 

Here, we focus again on pre-Lie structures and the Magnus formula in the context of It\^o calculus. Our results are obtained by adapting ideas from the theory of Rota--Baxter algebras to quasi-shuffle algebras. Our presentation is almost self-contained. On Rota--Baxter algebras and integral calculus, we refer to \cite{Nacer2020} for a general survey combined with  references.

In this subsection, $\mathcal A$ denotes an algebra of matrix-valued semimartingales (notice that we do not require continuity anymore). 

\begin{proposition} For $X,Y \in \mathcal A$, set $X\succdot Y:=X\succ Y+[X,Y]$ and $X \blacktriangleright\ Y:=X\succdot  Y-Y\prec X$, then the pair $(\mathcal A,\blacktriangleright)$ is a pre-Lie algebra. Moreover, $\llbracket X,Y \rrbracket =[X,Y]_\blacktriangleright$.
\end{proposition}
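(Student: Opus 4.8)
The plan is to reduce the statement to a construction already present in the text. First I would show that keeping $\prec$ but replacing $\succ$ by $\succdot$ endows $\mathcal{A}$ with the structure of a non-commutative shuffle algebra (Definition~\ref{shu}), \emph{with the same underlying associative product}, since $X\prec Y+X\succdot Y=X\prec Y+X\succ Y+[X,Y]=XY$. Granting this, the observation recorded earlier---that every non-commutative shuffle algebra $(A,\prec,\succ)$ carries a pre-Lie structure with product $x\triangleright y:=x\succ y-y\prec x$---applies to $(\mathcal{A},\prec,\succdot)$ and produces exactly $X\blacktriangleright Y=X\succdot Y-Y\prec X$; hence $(\mathcal{A},\blacktriangleright)$ is pre-Lie. (Equivalently, one may invoke the familiar fact that in a tridendriform algebra $(A,\prec,\succ,\cdot)$ the pair $(\prec,\succ+\cdot)$ is a dendriform algebra, applied here with $\cdot=[\ ,\ ]$, since Karandikar's identities of Theorem~\ref{thm:quasishufIto} are precisely the tridendriform axioms.)

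To prove the shuffle identities \eqref{shuffle} with $\succ$ replaced by $\succdot$, I would check the three axioms separately. The first, $(X\prec Y)\prec Z=X\prec(YZ)$, is nothing but \eqref{qsax1}, the associative product being unchanged. For the third, $(X\succdot Y)\prec Z=X\succdot(Y\prec Z)$, expand the left-hand side as $(X\succ Y)\prec Z+[X,Y]\prec Z$ and rewrite the two summands by \eqref{qsax3} and \eqref{qsax7}, obtaining $X\succ(Y\prec Z)+[X,Y\prec Z]=X\succdot(Y\prec Z)$. The middle axiom, $X\succdot(Y\succdot Z)=(XY)\succdot Z$, is the heart of the matter: expanding the left-hand side yields the four terms $X\succ(Y\succ Z)+X\succ[Y,Z]+[X,Y\succ Z]+[X,[Y,Z]]$, which by \eqref{qsax2}, \eqref{qsax5}, \eqref{qsax6} and the associativity of the bracket \eqref{qsax4} become $(XY)\succ Z+[X\succ Y,Z]+[X\prec Y,Z]+[[X,Y],Z]=(XY)\succ Z+[XY,Z]=(XY)\succdot Z$. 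This is the only place where the compatibility of $\succ$ and $\prec$ with the covariation bracket is really used, so I expect it to be the main step; it is nonetheless a routine manipulation, and everything else is bookkeeping.

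For the last assertion I would note that in any non-commutative shuffle algebra the induced pre-Lie bracket equals the commutator of the underlying associative product: $x\triangleright y-y\triangleright x=(x\succ y-y\prec x)-(y\succ x-x\prec y)=xy-yx$. Applied to $(\mathcal{A},\prec,\succdot)$, whose associative product is the ordinary matrix product of semimartingales, this gives $X\blacktriangleright Y-Y\blacktriangleright X=XY-YX=\llbracket X,Y\rrbracket$, i.e.\ $\llbracket X,Y\rrbracket=[X,Y]_\blacktriangleright$, as claimed.
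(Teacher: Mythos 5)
Your proof is correct, but it takes a genuinely different route from the paper's. The paper proves the proposition head-on: it verifies $\llbracket X,Y\rrbracket=[X,Y]_\blacktriangleright$ by the same two-line expansion you give at the end, and then establishes the pre-Lie identity for $\blacktriangleright$ by a direct calculation with iterated stochastic integrals, invoking the Jacobi identity of the commutator bracket. You instead factor the argument through two structural facts, both of which the paper records elsewhere but does not use inside its proof: (i) that $(\mathcal A,\prec,\succdot)$ is a non-commutative shuffle algebra in the sense of Definition~\ref{shu} with the same underlying associative product --- and your verification of the three axioms via \eqref{qsax1}, then \eqref{qsax3}+\eqref{qsax7}, then \eqref{qsax2}+\eqref{qsax5}+\eqref{qsax6}+\eqref{qsax4}, coincides step for step with the computation the paper places in the remark immediately \emph{after} the proposition; and (ii) the forgetful functor from shuffle algebras to pre-Lie algebras, $x\triangleright y:=x\succ y-y\prec x$, which the paper states only as a remark in the Stratonovich subsection. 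What your route buys is a purely axiomatic proof, valid in any quasi-shuffle algebra with no reference to semimartingales or integrals; what the paper's route buys is an explicit probabilistic computation showing where the covariation terms go. The one thing to tighten: the shuffle-to-pre-Lie implication is asserted in the paper only by ``the same argument shows\dots'', so for a self-contained argument you should either carry out that short verification (expand $x\triangleright(y\triangleright z)-(x\triangleright y)\triangleright z$ using the three shuffle axioms and check symmetry in $x,y$) or cite it as the standard dendriform-to-pre-Lie fact; as written you lean on a remark whose proof the paper itself leaves implicit.
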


\begin{proof}
Indeed, the quasi-shuffle axioms imply that $$XY=X\prec Y+X\succ Y+[X,Y]=X\prec Y+X \succdot Y.$$ This yields
\begin{align*}
	\llbracket X,Y \rrbracket 
	= XY-YX 
	&= X\prec Y+X\succdot Y-Y\prec X-Y\succdot X\\
	&=(X\succdot  Y-Y\prec X)-(Y\succdot  X-X\prec Y)\\
	&=X\blacktriangleright Y-Y\blacktriangleright X=[X,Y]_\blacktriangleright.
\end{align*}
Using the Jacobi identity (to avoid any notational ambiguity, recall that $[\ ,\ ]$ denotes in this article the covariation bracket, not to be confused with the Lie bracket $\llbracket \ ,\  \rrbracket$):
\begin{align*}
	([X,Y]_\blacktriangleright\ \blacktriangleright Z)_t
	&=(\llbracket X,Y \rrbracket \blacktriangleright Z)_t
	=\int_0^t \llbracket \llbracket  X_s,Y_s\rrbracket ,\circ dZ_s\rrbracket
				+[\llbracket X,Y\rrbracket,Z]_t\\
	&=(\int_0^t\llbracket X_s,\llbracket Y_s,\circ dZ_s\rrbracket \rrbracket  +[ XY ,Z]_t)
	 -(\int_0^t\llbracket Y_s,\llbracket  X_s,\circ dZ_s\rrbracket \rrbracket  +[ XY ,Z]_t)\\
	&=\int_0^t\llbracket  X_s,\llbracket  Y_s,\circ dZ_s\rrbracket \rrbracket 
			+[X\succ Y+X\prec Y+[X,Y],Z]_t\\
    	& -(\int_0^t\llbracket  Y_s,\llbracket X_s,\circ dZ_s\rrbracket \rrbracket 
			+[ Y\succ X+Y\prec X+[Y,X] ,Z]_t)\\
    	&=\int_0^t\llbracket X_s,\llbracket Y_s,\circ dZ_s\rrbracket \rrbracket 
			+[X, Y\succ Z]_t+(X\succ [Y,Z])_t+[[X,Y],Z]_t\\
    	&-(\int_0^t\llbracket  Y_s,\llbracket  X_s,\circ dZ_s\rrbracket \rrbracket  
    			+ [Y, X\succ Z]_t+(Y\succ [X,Z])_t+[[Y,X],Z]_t)\\
	&=(X\blacktriangleright (Y\blacktriangleright Z))_t 
			- (Y\blacktriangleright (X\blacktriangleright Z))_t.
\end{align*}
\end{proof}

\begin{remark}The triple $(\mathcal A,\prec,\succdot)$ is a shuffle algebra. More generally, in \cite{K2002} it was shown that any quasi-shuffle algebra gives rise to a shuffle algebra.  
Indeed, $(X\prec Y)\prec Z=X\prec (YZ)$ and
\begin{align*}
	(X\succdot Y)\prec Z 
	&=(X\succ Y)\prec Z+[X,Y]\prec Z\\
	&=X\succ(Y\prec Z)+[X,Y\prec Z]
	=X\succdot (Y\prec Z)\\[0.3cm]
	(XY)\succdot Z
	&=(X Y)\succ Z+[XY,Z]\\
	&=(X Y)\succ Z +[X\succ Y,Z] + [X\prec Y,Z]+[[X,Y],Z]\\
	&=X\succ(Y\succ Z)+ X\succ [Y,Z]+[X,Y\succ Z]+[X,[Y,Z]]\\
	&=X\succ (Y\succdot Z)+[X,Y\succdot Z]
	=X\succdot (Y\succdot Z).
\end{align*}
We note that this property is also common in Rota--Baxter algebras, where a shuffle algebra structure is defined similarly starting from the operations $R(X)Y$, $XR(Y)$ and $XY$ instead of $\prec,\succ,[-,- ]$. 
\end{remark}

\begin{definition}
For a matrix-valued semimartingale $X \in \mathcal A$ the (It\^o) right stochastic exponential is defined through $$\mathcal{E}_{\prec}(X) = \mathbf{1} + \big(X \prec \mathcal{E}_{\prec}(X)\big),$$
or, by a Picard iteration, as a series
$$
	\mathcal{E}_{\prec}(X)
	=\mathbf{1}+X+X \prec X+\dots +X \prec (X \prec (\cdots  \prec (X \prec X)\cdots ))+\dots
$$
\end{definition}

We are interested again in the stochastic analogue of the Baker--Campbell--Hausdorff problem of computing the logarithm $\Gamma(X)$ of the solution:
\begin{equation}
\label{eq:preLieMag2bis}
 	\mathcal{E}_{\prec}(X)=\exp\!\big(\Gamma(X)\big).
\end{equation}

Let us denote now by $\mathbb{R}_\blacktriangleright[\mathcal A]$ the enveloping algebra of $\mathcal A$ constructed exactly as in the previous section but using the new pre-Lie product $\blacktriangleright$ instead of $\triangleright$. In particular, as a vector space, $\mathbb{R}_\blacktriangleright[\mathcal A]=\mathbb{R}[\mathcal A]$, the algebra of polynomials over $\mathcal A$. 

To avoid notational ambiguities, we write $\ast_\blacktriangleright$ for the associative product making $\mathbb{R}_\blacktriangleright[\mathcal A]$ the enveloping algebra of $L_\mathcal A$ and $\{P\}_\blacktriangleright X$ the brace map for $P \in \mathbb{R}_\blacktriangleright[\mathcal A]$. We also write $\mathcal{T}_\blacktriangleright$ for the corresponding time-ordered product, associated to the left-half shuffle $\prec$, e.g., $\mathcal{T}_\blacktriangleright[X,Y]:=X\prec Y$, and so on. Lastly, we denote $\iota_\blacktriangleright$ the canonical algebra map from $(\mathbb{R}_\blacktriangleright[\mathcal A],\ast_\blacktriangleright)$ to $\mathcal A$ obtained from the universal properties of the enveloping algebra. Theorem \ref{thm:keyth} holds \it mutatis mutandis \rm in the new context and equation (\ref{eq:preLieMag2bis}) lifts in $\mathbb{R}_\blacktriangleright[\mathcal A]$ to:
\begin{equation}
\label{eq:preLieMag4bis}
 	\exp^{\odot} (X)=\exp^{\ast_\blacktriangleright}( \tilde\Gamma(X)).
\end{equation}
Theorem \ref{thm:keyth} and the general properties of enveloping algebras insure that this identity maps to (\ref{eq:preLieMag2bis}) by $\iota_\blacktriangleright$ and that $\iota_\blacktriangleright(\tilde\Gamma(X))=\Gamma(X)$.

We obtain finally the analogous of Proposition \ref{ChaPa2013}:

\begin{proposition}The element $\tilde\Gamma(X)=\log^\ast_\blacktriangleright\circ \exp^\odot(X)$ belongs to $\mathcal A$ and satifies the fixed point equation:
\begin{equation}\label{MagLietheobis}
	\tilde\Gamma(X)
	=\big\lbrace\frac{\tilde\Gamma(X)}{\exp^\ast_\blacktriangleright(\tilde\Gamma(X))
	-{\mathbf 1}}\big\rbrace_\blacktriangleright X,
\end{equation}
where $\tilde\Gamma(X)/(\exp^\ast_\blacktriangleright(\tilde\Gamma(X))-{\mathbf 1})$ is computed in $\mathbb{R}_\blacktriangleright[\mathcal A]$ using the $\ast_\blacktriangleright$ product.
\end{proposition}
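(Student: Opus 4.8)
The plan is to observe that the statement is the exact analogue of Proposition~\ref{ChaPa2013}, with the pre-Lie algebra $(\mathcal A,\triangleright)$ replaced throughout by $(\mathcal A,\blacktriangleright)$, and that the proof given in \cite{ChaPa2013} uses nothing about $\mathcal A$ beyond the abstract left pre-Lie identity. The one hypothesis we need has just been supplied by the preceding Proposition: $(\mathcal A,\blacktriangleright)$ is a pre-Lie algebra. Granting this, the Guin--Oudom construction equips $\mathbb{R}_\blacktriangleright[\mathcal A]=\mathbb{R}[\mathcal A]$ with the brace maps $\{-\}_\blacktriangleright$ and the associative product $\ast_\blacktriangleright$ of~\eqref{astproduct}, and Theorem~\ref{thm:GO} identifies $(\mathbb{R}_\blacktriangleright[\mathcal A],\ast_\blacktriangleright)$ with the enveloping algebra $U(L_{\mathcal A})$, carrying its canonical cocommutative Hopf algebra structure whose primitive part is exactly $\mathcal A$.

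First I would check membership in $\mathcal A$. The coproduct on the symmetric algebra $\mathbb{R}[\mathcal A]$ ignores the pre-Lie product, so for $X\in\mathcal A$ the element $\exp^{\odot}(X)=\sum_{n\ge 0}X^{\odot n}/n!$ is group-like in $\mathbb{R}_\blacktriangleright[\mathcal A]$. Since the $\ast_\blacktriangleright$-logarithm of a group-like element in a connected graded Hopf algebra is primitive, $\tilde\Gamma(X)=\log^{\ast_\blacktriangleright}\circ\exp^{\odot}(X)$ lies in the primitive part, i.e.\ in $\mathcal A$; this is also what makes the right-hand side of~\eqref{MagLietheobis} well defined, as $\{-\}_\blacktriangleright X$ takes values in $\mathcal A$.

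Next, the fixed-point identity~\eqref{MagLietheobis} is the pre-Lie Magnus recursion in $U(L_{\mathcal A})$ and is obtained exactly as in \cite{ChaPa2013}: passing to the one-parameter family $\exp^{\odot}(tX)$, differentiating the lifted identity $\exp^{\odot}(tX)=\exp^{\ast_\blacktriangleright}(\tilde\Gamma(tX))$ in $t$ via the standard formula for the differential of an associative exponential, and using that on primitive elements left $\ast_\blacktriangleright$-multiplication is encoded by the brace map, so that iterated $\ast_\blacktriangleright$-adjoint actions are governed by $\{(-)^{\ast_\blacktriangleright n}\}_\blacktriangleright$. The generating series $x/(e^x-1)=\sum B_n x^n/n!$ then produces~\eqref{MagLietheobis} verbatim. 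Alternatively, one simply invokes Proposition~\ref{ChaPa2013} directly, its proof in \cite{ChaPa2013} being stated for an arbitrary pre-Lie algebra.

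The only genuine point to verify --- and the one I would dwell on --- is that the preceding subsection's development between Theorem~\ref{thm:GO} and Proposition~\ref{ChaPa2013} does not covertly use the shuffle (continuous-semimartingale) structure: it does not, since the brace map, $\ast_\blacktriangleright$, $\iota_\blacktriangleright$ and Theorem~\ref{thm:keyth} are built from the pre-Lie product $\blacktriangleright$ alone, the half-shuffle $\prec$ entering only through the cosmetic identification $\mathcal T_\blacktriangleright[X,Y]=X\prec Y$ of the time-ordered product. This is precisely why Theorem~\ref{thm:keyth} was noted to hold \emph{mutatis mutandis}, and it is all that is needed to close the argument.
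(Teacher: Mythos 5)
Your proposal is correct and follows essentially the same route as the paper, which offers no separate proof but presents the result as the verbatim analogue of Proposition~\ref{ChaPa2013} once the preceding Proposition has established that $(\mathcal A,\blacktriangleright)$ is a pre-Lie algebra, so that the Guin--Oudom construction and the Lie-theoretic argument of \cite{ChaPa2013} apply unchanged. Your additional checks (primitivity of $\log^{\ast_\blacktriangleright}$ of the group-like $\exp^{\odot}(X)$, and that nothing in the intervening development secretly uses the shuffle structure of continuous semimartingales) only make explicit what the paper leaves implicit.
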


Setting 
$$
	\ell^{(n)}_{X\blacktriangleright}(Y) :=X \blacktriangleright (\ell^{(n-1)}_{X\blacktriangleright}(Y)),
$$
$\ell^{(0)}_{X\blacktriangleright}(Y):=Y$, we get finally a pre-Lie Magnus expansion of the logarithm of the right It\^o stochastic exponential:

\begin{theorem} 
The matrix-valued semi-martingale  $\Gamma(X)$, which is the logarithm of the (It\^o) stochastic exponential, satisfies the fixed point equation
\begin{equation}
\label{eq:preLieMag3bis}
	\Gamma(X) = \sum_{n \ge 0} \frac{B_n}{n!} \ell^{(n)}_{\Gamma(X)\blacktriangleright}(X).
\end{equation}
\end{theorem}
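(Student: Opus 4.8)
The plan is to deduce the final theorem by transporting the fixed-point equation \eqref{MagLietheobis}, which lives in the enveloping algebra $\mathbb{R}_\blacktriangleright[\mathcal A]$, down to $\mathcal A$ via the canonical algebra map $\iota_\blacktriangleright$. This is the exact analogue of how the Stratonovich pre-Lie Magnus expansion \eqref{eq:preLieMag3} was obtained from Proposition \ref{ChaPa2013}, and the only thing to check is that every piece of that derivation survives the replacement $\triangleright \rightsquigarrow \blacktriangleright$.

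First I would recall that, since $x/(e^x-1) = \sum_{n\ge 0}\frac{B_n}{n!}x^n$ as a formal power series, the element $\tilde\Gamma(X)/(\exp^{\ast_\blacktriangleright}(\tilde\Gamma(X)) - \mathbf 1) \in \mathbb{R}_\blacktriangleright[\mathcal A]$ is by definition $\sum_{n\ge 0}\frac{B_n}{n!}\,\tilde\Gamma(X)^{\ast_\blacktriangleright n}$, where $\tilde\Gamma(X)^{\ast_\blacktriangleright n}$ denotes the $n$-th power for the $\ast_\blacktriangleright$ product (with $\tilde\Gamma(X)^{\ast_\blacktriangleright 0} = \mathbf 1$). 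Plugging this into \eqref{MagLietheobis} and using linearity of the brace map in its polynomial argument gives
\begin{equation*}
	\tilde\Gamma(X) = \sum_{n\ge 0}\frac{B_n}{n!}\,\{\tilde\Gamma(X)^{\ast_\blacktriangleright n}\}_\blacktriangleright X .
\end{equation*}

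Second, I would apply $\iota_\blacktriangleright$ to both sides. On the left, the text already states $\iota_\blacktriangleright(\tilde\Gamma(X)) = \Gamma(X)$. On the right, linearity of $\iota_\blacktriangleright$ reduces matters to the single identity $\iota_\blacktriangleright(\{X^{\ast_\blacktriangleright n}\}_\blacktriangleright Y) = \ell^{(n)}_{X\blacktriangleright}(Y)$, which is the $\blacktriangleright$-version of the relation $\iota(\{X^{\ast n}\}Y) = \ell^{(n)}_{X\triangleright}(Y)$ invoked just before \eqref{eq:preLieMag3}; it holds by the general formal properties of enveloping algebras of pre-Lie algebras (the brace $\{P\}_\blacktriangleright Z$ is built so that $\{Y_1,\dots,Y_n\}_\blacktriangleright Z$ corresponds under $\iota_\blacktriangleright$ to iterated left multiplication by the $Y_i$ for $\blacktriangleright$, and $\iota_\blacktriangleright$ is an algebra map for $\ast_\blacktriangleright$). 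Applying it with $X$ replaced by $\tilde\Gamma(X)$ — noting $\iota_\blacktriangleright(\tilde\Gamma(X)^{\ast_\blacktriangleright n}) = \Gamma(X)^{n}$ in $\mathcal A$ is not even needed, since the whole monomial $\{\tilde\Gamma(X)^{\ast_\blacktriangleright n}\}_\blacktriangleright X$ maps directly to $\ell^{(n)}_{\Gamma(X)\blacktriangleright}(X)$ — yields exactly \eqref{eq:preLieMag3bis}.

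The main obstacle, such as it is, is purely expository rather than mathematical: one must make sure the reader accepts that Theorem \ref{thm:keyth}, Proposition \ref{ChaPa2013}, and the enveloping-algebra identity $\iota(\{X^{\ast n}\}Y)=\ell^{(n)}_{X\triangleright}(Y)$ are formal statements about an arbitrary pre-Lie algebra and its Guin--Oudom enveloping algebra, and therefore apply verbatim to $(\mathcal A,\blacktriangleright)$ once the preceding Proposition has established that $(\mathcal A,\blacktriangleright)$ is pre-Lie. Since none of those three inputs used any special feature of $\triangleright$ beyond the pre-Lie identity, the transport is automatic; the slightly delicate point worth a sentence is simply that $\mathcal A$ here is allowed to have jumps, so one should emphasise that the algebraic identities \eqref{qsax1}--\eqref{qsax7} (and nothing about continuity) are all that enter. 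Hence the theorem follows.
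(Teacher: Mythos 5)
Your proposal is correct and follows essentially the same route as the paper: expand $x/(e^x-1)$ into the Bernoulli series, substitute into the fixed point equation \eqref{MagLietheobis}, and push down to $\mathcal A$ via $\iota_\blacktriangleright$ using $\iota_\blacktriangleright(\{X^{\ast_\blacktriangleright n}\}_\blacktriangleright Y)=\ell^{(n)}_{X\blacktriangleright}(Y)$. The paper only sketches this step (invoking the Stratonovich case \emph{mutatis mutandis}), so your write-up is in fact a more explicit version of the intended argument, with no substantive deviation.
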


%%%%%%%%%%%%%%%%%%%%%%%%%%%%%%%%%%%%%%%%%%%%%%%
%%%%%%%%%%%%%%%%%%%%%%%%%%%%%%%%%%%%%%%%%%%%%%%
%%%%%%%%%%%%%%%%%%%%%%%%%%%%%%%%%%%%%%%%%%%%%%%
%%%%%%%%%%%%%%%%%%%%%%%%%%%%%%%%%%%%%%%%%%%%%%%

%%%%%%%%%%%%%%%%%%%%%%%% referenc.tex %%%%%%%%%%%%%%%%%%%%%%%%%%%%%%
% sample references
% %
% Use this file as a template for your own input.
%
%%%%%%%%%%%%%%%%%%%%%%%% Springer-Verlag %%%%%%%%%%%%%%%%%%%%%%%%%%
%
% BibTeX users please use
% \bibliographystyle{}
% \bibliography{}
%

\end{document}